\documentclass[eqno]{siamltex704}
\usepackage{amsmath}
\usepackage{graphicx}
\usepackage{mathrsfs}
%\usepackage{graphics}
%% or use the graphicx package for more complicated commands
\usepackage{float}
\usepackage{amsfonts,amssymb}
\usepackage{dsfont}
\usepackage{pifont}
\usepackage{hyperref}
\usepackage{multirow}
\usepackage{color}
\usepackage{geometry}
\usepackage[comma, sort&compress, numbers]{natbib}
\geometry{hcentering,scale=0.72}

\newtheorem{remark}{Remark}[section]
\newtheorem{example}{Example}[section]

\newtheorem{algorithm}{Weak Galerkin Algorithm}

\newcommand{\bu}{{\bf u}}

\newcommand{\bw}{{\bf w}}

\newcommand{\be}{{\bf e}}
\newcommand{\bv}{{\mathbf v}}

\def\T{{\mathcal T}}

\def\pT{{\partial T}}
\def\l{{\langle}}
\def\r{{\rangle}}
\def\bbf{{\bf f}}

\def\bn{{\bf n}}

\def\3bar{{|\hspace{-.02in}|\hspace{-.02in}|}}

\newcommand{\bm}[1]{\mbox{\boldmath{$#1$}}}
\def\bPhi{\bm\Phi}
\def\bRT{\textbf{R}_T}

\setlength{\parindent}{0.25in} \setlength{\parskip}{0.08in}

\title {A Locking-Free Weak Galerkin Finite Element Method for Linear Elasticity Problems}

\author{
Fuchang Huo\thanks{School of Mathematics, Jilin University, Changchun 130012, Jilin, China (huofc22@mails.jlu.edu.com)}
\and Ruishu
Wang\thanks{School of Mathematics, Jilin University, Changchun 130012, Jilin, China (wangrs\_math@jlu.edu.cn)} 
\and Yanqiu Wang\thanks{School of Mathematical Sciences, Nanjing Normal University, Nanjing 210023, Jiangsu, China (yqwang@njnu.edu.cn)} 
\and Ran Zhang\thanks{School of Mathematics, Jilin University, Changchun 130012, Jilin, China (zhangran@jlu.edu.cn)}
 }
\begin{document}

\maketitle

\begin{abstract}
In this paper, we introduce and analyze a lowest-order locking-free weak Galerkin (WG) finite element scheme for the grad-div formulation of linear elasticity problems. 
The scheme uses linear functions in the interior of mesh elements and constants on edges (2D) or faces (3D), respectively, to approximate the displacement.
An $H(div)$-conforming displacement reconstruction operator is employed to modify test functions in the right-hand side of the discrete form, in order to eliminate the dependence of the $Lam\acute{e}$ parameter $\lambda$ in error estimates, i.e., making the scheme locking-free. The method works without requiring $\lambda \|\nabla\cdot \mathbf{u}\|_1$ to be bounded. We prove optimal error estimates, independent of $\lambda$, in both the $H^1$-norm and the $L^2$-norm. Numerical experiments validate that the method is effective and locking-free.
\end{abstract}

\begin{keywords} weak Galerkin finite element methods, weak derivative,
linear elasticity problems, $H(div)$-conforming reconstruction, locking-free.
\end{keywords}

\begin{AMS}
Primary 65N30, 65N15, 74S05; Secondary 35J50, 74B05.
\end{AMS}

\pagestyle{myheadings}
\section{Introduction}
\label{section:introduction}
Let $\Omega\subset\mathbb R^d\
(d = 2, 3)$ be an open bounded connected domain, and the boundary $\Gamma=\partial\Omega$ be Lipschitz continuous. We consider the linear elasticity  problem which seeks a displacement vector $\bu$ satisfying
\begin{eqnarray}\label{primal_model}
	-\mu\Delta\bu-(\lambda+\mu)\nabla(\nabla\cdot\bu)&=&\bbf,\qquad\, \text{in}\ \Omega,\\
	\bu&=&\textbf{0},\qquad \text{on}\ \Gamma, \label{bc2}
\end{eqnarray}
where $\bbf$ is the body force, $\mu$ and $\lambda$ are $Lam\acute{e}$ constants  satisfying $(\mu, \lambda) \in [\mu_1,\mu_2]\times(0,\infty)$, $0<\mu_1<\mu_2\ll\infty$ .

The weak formulation of the linear elasticity problems (\ref{primal_model})-(\ref{bc2})  can be written as: Finding $\bu\in [H_0^1(\Omega)]^d$ such that 
\begin{eqnarray}\label{model-weak}
	\mu(\nabla\bu,\nabla\bv)+(\lambda+\mu)(\nabla
	\cdot\bu,\nabla \cdot\bv)&=&(\bbf,\bv), \,\, \forall\, \bv\,\in
	[H_0^1(\Omega)]^d,
\end{eqnarray}
where $H^1(\Omega)$ and $H_0^1(\Omega)$ are defined by
\begin{align*}
	H^1(\Omega)&=\{v\in L^2(\Omega):\, \nabla v\in [L^2(\Omega)]^d\},\\
	H_0^1(\Omega)&=\{v\in H^1(\Omega):\, v|_{\Gamma}=0\}.
\end{align*}

In this paper, we use the usual notations for Sobolev
spaces \cite{ciarlet-fem}. For any open bounded  domain  $D\subset \mathbb{R}^d$, $(\cdot,\cdot)_{s,D}$, $|\cdot|_{s,D}$. and $\|\cdot\|_{s,D}$ represent the inner product, seminorm, and norm in the Sobolev space $H^s(D)$($s\ge 0$), respectively.

Elastic materials become incompressible or nearly incompressible when the $Lam\acute{e}$ constant  $\lambda$  goes to $\infty$. In this case,  the finite element solution often does not converge to the solution of the original problem, or the convergence  cannot reach optimal order. This is called the ``locking" phenomenon of elasticity problems \cite{b2008,Ciarlet,Brezzi-Fortin}. The reason for ``locking" phenomenon is that the finite element approximate error depends on $Lam\acute{e}$ constant $\lambda$. At present, existing methods to circumvent the locking phenomenon include mixed finite element  methods (MFEM)  \cite{abd1984,bbf2009,cgg2010,hs2007,mhs2009,vogelius}, nonconforming finite element methods (NC-FEM) \cite{babuska-suri,LeeC,zhangzhimin1997}, discontinuous Galerkin methods (DG) \cite{Daniele2013,larson,Wihler}, virtual element  methods (VEM) \cite{Virtualelastic2014,Virtualelastic2013}, and so on. In this paper, we consider using the weak Galerkin (WG) finite element method  to  solve  the linear elasticity problems, and introduce special techniques to make it locking-free.

The WG method is an efficient numerical method for solving partial differential equations (PDEs). It was first proposed by J. Wang and X. Ye in \cite{wysec2} for solving second-order elliptic problems. Recently, the WG method has attracted  many attentions and  achieved abundant results, such as: the multigrid method for WG \cite{CWWY15}, WG for the incompressible flow \cite{ZL18}, the post-processing technique for WG  \cite{WangZhangZhangZhang18}, the maximum principle of WG \cite{maximumwang2,WangYeZhaiZhang18} and so on. In addition, WG method has been successfully applied to biharmonic equations \cite{MuWangYeZhang14,ZhangZhai15},  Stokes equations \cite{wy1302,WangZhaiZhangS2016,wangwangliu2022},  Navier-Stokes equations \cite{HMY2018,LLC18,zzlw2018},  Maxwell's equations \cite{MLW14},  Brinkman equations \cite{MuWangYe14,WangZhaiZhang2016,ZhaiZhangMu16},  elasticity equations \cite{chenxie2016,Liu,lockingw,Yisongyang}, etc.

WG method for linear elasticity problems based on primal  formulations has been explored in \cite{Liu,LiuWang2023,lockingw,Yisongyang}. Among them, the methods proposed in \cite{LiuWang2023,lockingw} are locking-free because they are shown to be equivalent to locking-free displacement-pressure mixed formulations. On the other hand,  in papers \cite{Liu, Yisongyang}, lowest-order WG methods based on the local Raviart-Thomas spaces  are considered, which do not require a stabilization term.

In this paper, we construct a locking-free weak Galerkin finite element method for linear elasticity using the general polynomial space and  an  $H(div)$-conforming displacement reconstruction operator. In each element, we use  piecewise linear vector functions in the interior and piecewise constant functions on the boundary to approximate the displacement, respectively. 
In order to  make the scheme locking-free, we introduce a divergence-preserving displacement reconstruction operator  to modify  the test function in the  right-hand side  of the discrete problem. The approximate error is then  independent of the $Lam\acute{e}$ constant $\lambda$.  An important advantage of our method is that the term $\lambda \|\nabla\cdot \bu\|_1$ does not need to meet additional boundedness condition. Moreover, our numerical scheme only modifies the right side but does not change the stiffness matrix. Thus it is easy to implement.

The paper is organized as follows. Section \ref{Section:fem-algorithms} is devoted to constructing a weak Galerkin finite element scheme for linear elasticity problems.
In Section \ref{section:H1}, we present the optimal order error estimate in the $H^1$-norm. Section \ref{section:L2} is devoted to deriving the optimal order error estimate in the $L^2$-norm. Finally, in Section \ref{section:Numex}, we give  numerical results to  validate the accuracy and locking-free property of the algorithm.

\section{Numerical Algorithms}\label{Section:fem-algorithms}
Let ${\cal T}_h$ be a  simplicial partition of $\Omega\subset \mathbb{R}^d$ satisfying the shape regularity hypotheses in
\cite{wy1202}. For each element $T\in \T_h$, denote by $h_T$ the diameter of
$T$. Let $h=\max_{T\in \T_h} h_T$  be the mesh size of $\T_h$.
Denote by ${\cal E}_h$ the set of all edges (2D) or faces (3D) in
${\cal T}_h$ and  by ${\cal E}^0_h$
the set of all interior edges (2D) or faces (3D) in ${\cal T}_h$. Let $P_k(T)$ be the set of polynomials on $T$ of  degree no more than $k$.

We define the weak finite element space $V_h$ as
\begin{align*}
	V_h=\{\bv =\{\bv _0,\bv_b\}: \ \bv_0|_T \in [P_1(T)]^d, \bv_b |_e \in [P_0(e)]^d,\, \forall\, T\in \T_h, \, \forall\, e\in{\cal E}_h\}.
\end{align*}
% For any interior edge or face $e$, $\bv_b$ is a single-valued component.

Define a subspace of $V_h$ as
\begin{equation}\label{EQ:WFE-global}
	V^0_h=\{\bv =\{\bv _0,\bv_b\}\in V_h:\ \bv_b=\textbf{0} ~\text{on}~ \Gamma\}.
\end{equation}

For $\bv \in V_h$, its discrete weak divergence $\nabla_{w,0}\cdot\bv$ and  discrete weak gradient
$\nabla_{w,0}\bv$ are calculated  element by element on each  $T\in {\cal T}_h$  by
\begin{align*}
	(\nabla_{w,0}\cdot \bv)|_T=&\nabla_{w,0,T}\cdot
	(\bv|_T),\\
	(\nabla_{w,0}  \bv)|_T=&\nabla_{w,0,T} (\bv|_T),
\end{align*}
where  $\nabla_{w,0,T}\cdot \bv \in P_0(T)$ and $\nabla_{w,0,T} \bv \in [P_0(T)]^{d\times d}$ are defined by
\begin{equation}\label{weak-divergence}
	(\nabla_{w,0,T}\cdot \bv,\phi)_T=-(\bv_0,\nabla
	\phi)_T+\langle \bv_b\cdot \bn,
	\phi\rangle_{\partial T}=\langle \bv_b\cdot \bn,
	\phi\rangle_{\partial T},\quad \forall\,\phi\in P_0(T),
\end{equation}
\begin{equation}\label{weak-gradient}
	(\nabla_{w,0,T} \bv,\varphi)_T=-(\bv_0,\nabla \cdot
	\varphi)_T+\langle \bv_b,  \varphi \bn
	\rangle_{\partial T}=\langle \bv_b,  \varphi\bn
	\rangle_{\partial T},\quad\forall\, \varphi\in[P_0(T)]^{d\times d},
\end{equation}
respectively. Here $\bn$ is the unit outward normal direction on $\partial T$, $\langle \bv_b\cdot \bn,
\phi\rangle_{\partial T}=\int_{\partial T} \bv_b\cdot \bn
\phi\,ds$ and  $\langle \bv_b,\varphi \bn
\rangle_{\partial T}=\int_{\partial T} \bv_b\cdot (\varphi \bn) ds$ are usual $L^2$ inner products.

For simplicity, we abbreviate the notation $\nabla_{w,0}\cdot$ and $\nabla_{w,0}$ as $\nabla_{w}\cdot$ and $\nabla_{w}$ in the rest of the paper, respectively.
For each $e\in\mathcal{E}_h$, $Q_b$ represents the $L^2-$ projection operator onto the space $[P_0(e)]^d$.

Similar to \cite{MuStkoes2020,MuyezhangStkoes2021,wangMuStkoes2021,wangwangliu2022}, we introduce a displacement reconstruction operator $\mathbf{R}_h:V_h\to H(div;\Omega):=\{\bw\in [L^2(\Omega)]^d,\, \nabla\cdot \bw \in L^2(\Omega)\}$ as follows. On each $T\in \T_h$, define 
$$\mathbf{R}_h|_T=\bRT:V_h|_T\to RT_0(T),$$
where $RT_0(T)=[P_0(T)]^d+\textbf{x}P_0(T)$ is the lowest-order Raviart-Thomas space on $T$, satisfying
\begin{equation}\label{RT}
	\mathbf{R}_h(\bv)\cdot \bn_e = \bv_b\cdot \bn_e, \,\, \forall\, e\subset  \partial T.
\end{equation}
Here $\bn_e$ is the unit outward normal vector on $e\subset  \partial T$. The definition ensures that $\bRT(\bv)\in H(div;\Omega)$.

We introduce the following bilinear forms
\begin{eqnarray}\label{EQ:stabilizer}
	s(\bw,\bv)&=&\sum_{T\in\mathcal{T}_h}h_T^{-1}\langle Q_b\bw_0-\bw_b,Q_b\bv_0-\bv_b\rangle_{\partial T},
	\\
	a(\bw,\bv)&=&\mu\sum_{T\in\mathcal{T}_h}(\nabla_w\bw,\nabla_w\bv)_T
	+(\lambda+\mu)\sum_{T\in\mathcal{T}_h}(\nabla_w\cdot\bw,\nabla_w\cdot\bv)_T,\label{EQ:primalbilinear}
	\\
	a_s(\bw,\bv)&=&a(\bw,\bv)+s(\bw,\bv).\label{EQ:bilinearForm}
\end{eqnarray}

We then propose a new WG scheme in Algorithm \ref{algo-primal}, and compare it with the standard WG scheme in Algorithm \ref{algo-primal_old}.

\begin{algorithm}
	%\caption{New WG Algorithm}
	\label{algo-primal}
	A new WG scheme for weak formulation (\ref{model-weak}) is given by: find $\bu_h=\{\bu_0, \bu_b\}\in V_h^0$ such that
	\begin{eqnarray}\label{WGA_primal}
		a_s(\bu_h,\bv)=(\bbf, {\mathbf {R}}_h(\bv)), \qquad\forall\, \bv=\{\bv_0, \bv_b\}\in
		V_h^0.
	\end{eqnarray}
\end{algorithm}

\begin{algorithm}
	%\caption{Standard WG Algorithm}
	\label{algo-primal_old}
	A standard	WG scheme for weak formulation (\ref{model-weak}) is given by: find $\bu_h=\{\bu_0, \bu_b\}\in V_h^0$ such that
	\begin{eqnarray}\label{WGA_primal_old}
		a_s(\bu_h,\bv)=(\bbf,\bv_0), \qquad\forall\, \bv=\{\bv_0, \bv_b\}\in
		V_h^0.
	\end{eqnarray}
\end{algorithm}

\begin{theorem}
	The new WG scheme (\ref{WGA_primal}) admits a unique solution.
\end{theorem}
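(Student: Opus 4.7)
The plan is to reduce the problem to uniqueness, since $V_h^0$ is finite dimensional and the system is square. Thus it suffices to show that if $\bbf=0$ then the only solution of (\ref{WGA_primal}) is $\bu_h=0$. Setting the test function $\bv=\bu_h$ in the homogeneous equation yields $a_s(\bu_h,\bu_h)=0$. Since each piece of $a_s$ is a sum of squares (with $\mu>0$ and $\lambda+\mu>0$), this forces
\begin{equation*}
\nabla_w\bu_h=0 \text{ on each } T,\qquad \nabla_w\cdot\bu_h=0 \text{ on each }T,\qquad Q_b\bu_0=\bu_b \text{ on each }\partial T.
\end{equation*}

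The key algebraic step is to convert the vanishing of the weak gradient into the vanishing of the classical gradient of $\bu_0$. Using integration by parts on $T$ for $\bu_0\in[P_1(T)]^d$ and $\varphi\in[P_0(T)]^{d\times d}$, together with the definition (\ref{weak-gradient}), one has
\begin{equation*}
(\nabla_{w,T}\bu_h-\nabla\bu_0,\varphi)_T=\langle \bu_b-\bu_0,\varphi\bn\rangle_{\partial T}.
\end{equation*}
Since $\varphi\bn$ is constant on each face $e\subset\partial T$ and $Q_b$ is the $L^2$ projection onto $[P_0(e)]^d$, the identity $Q_b\bu_0=\bu_b$ makes the right-hand side vanish. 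Choosing $\varphi=\nabla\bu_0\in[P_0(T)]^{d\times d}$ then gives $\nabla\bu_0=0$ on every $T$, so $\bu_0$ is piecewise constant.

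Once $\bu_0$ is piecewise constant, the condition $Q_b\bu_0=\bu_b$ collapses to $\bu_b|_e=\bu_0|_T$ on every face $e\subset\partial T$. Since $\bu_b$ is single-valued across interior faces, adjacent element constants must agree, so $\bu_0$ is a constant on each connected component of $\Omega$. The boundary condition $\bu_b=\textbf{0}$ on $\Gamma$ (encoded in the definition (\ref{EQ:WFE-global}) of $V_h^0$) propagates this constant to zero throughout $\Omega$, hence $\bu_0\equiv\textbf{0}$ and $\bu_b\equiv\textbf{0}$.

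The only delicate point is the bridge between $\nabla_w\bu_h=0$ and $\nabla\bu_0=0$; it is genuinely necessary to use the stabilizer identity $Q_b\bu_0=\bu_b$ at this step, because without it $\nabla_w\bu_h=0$ alone only controls the trace $\bu_b$ and leaves $\bu_0$ essentially free. Everything else (divergence condition and boundary propagation) is routine, so I do not expect obstacles beyond verifying that the weak-gradient/stabilizer interplay works as sketched above. The divergence information $\nabla_w\cdot\bu_h=0$ is not even needed for the uniqueness argument, though it is consistent with the conclusion.
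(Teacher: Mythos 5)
Your proposal is correct and follows essentially the same route as the paper: coercivity of $a_s$ forces $\nabla_w\bu_h=0$ and $Q_b\bu_0=\bu_b$, the weak-gradient identity combined with the stabilizer condition yields $\nabla\bu_0=0$ on each element, and the piecewise constants are propagated to zero through the single-valued $\bu_b$ and the boundary condition. Your explicit justification for inserting $Q_b$ (that $\varphi\bn$ is constant on each face) and your observation that the divergence condition is not needed are both accurate refinements of the paper's argument.
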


\begin{proof}
	For finite dimensional systems, we need only to prove the uniqueness of the solution.
	Let $\bu_h^{(j)}=\{\bu_0^{(j)}, \bu_b^{(j)}\}\in V_h^0,\
	j=1,2$ be two solutions of (\ref{WGA_primal}), we have 
	\begin{eqnarray*}
		a_s(\bu_h^{(j)},\bv)=(\bbf,\mathbf{R}_h(\bv)), \qquad\forall\, \bv=\{\bv_0,
		\bv_b\}\in V_h^0, \ j=1,2.
	\end{eqnarray*}
	Let $\bw=\bu_h^{(1)}-\bu_h^{(2)}$, we obtain $\bw\in V_h^0$ and
	\begin{eqnarray}\label{Uniqueness:001}
		a_s(\bw,\bv)=0, \qquad\forall \,\bv=\{\bv_0, \bv_b\}\in V_h^0.
	\end{eqnarray}
	Letting $\bv=\bw$ in (\ref{Uniqueness:001}) we get
	$$
	a_s(\bw,\bw) = 0.
	$$
	By using the definition of $a_s(\cdot,\cdot)$ we have
	\begin{eqnarray*}
		\sum_{T\in\mathcal{T}_h}(\mu\nabla_w\bw,\nabla_w\bw)_T+\sum_{T\in\mathcal{T}_h}((\lambda+\mu)\nabla_w\cdot\bw,\nabla_w\cdot\bw)_T\\
		+\sum_{T\in\mathcal{T}_h}h_T^{-1}\langle
		Q_b\bw_0-\bw_b,Q_b\bw_0-\bw_b\rangle_{\partial T}=0,
	\end{eqnarray*}
	which leads to
	\begin{eqnarray}
		\nabla_w\bw&=&\textbf{0}, \qquad \text{in}\  T,\label{EQ:July12:000}\\
		\nabla_w\cdot\bw&=&0, \qquad \text{in}\  T,\\
		Q_b\bw_0-\bw_b&=&\textbf{0},  \qquad \text{on}\  \pT.\label{EQ:July12:001}
	\end{eqnarray}
	It follows from the definition of the discrete weak gradient (\ref{weak-gradient}), we arrive at
	\begin{eqnarray*}
		0&=&(\nabla_w\bw,\tau)_T
		\\
		&=&-(\bw_{0},\nabla\cdot\tau)_T+\langle\bw_{b},\tau\bn\rangle_\pT
		\\
		&=&(\nabla\bw_{0},\tau)_T-\langle\bw_{0}-\bw_{b},\tau \bn\rangle_\pT
		\\
		&=&(\nabla\bw_{0},\tau)_T-\langle
		Q_b\bw_{0}-\bw_{b},\tau\bn\rangle_\pT ,\,\, \forall\tau\in [P_{0}(T)]^{d\times d}.
	\end{eqnarray*}
	Letting $\tau=\nabla\bw_{0}$ in the equation, using $Q_b\bw_{0}-\bw_{b}=\textbf{0}$, we have $\nabla\bw_0=\textbf{0}$ on each element
	$T$. It follows that $\bw_{0}=constant$ on each element $T$. Thus, with the facts that $Q_b\bw_{0}=\bw_{0}=\bw_{b}$ on $\pT$ and
	$\bw_{b}=\textbf{0}$ on $\Gamma$, we obtain $\bw_{0}\equiv \textbf{0}$ and $\bw_{b}\equiv \textbf{0}$ in $\Omega$.
	This shows that $\bu_h^{(1)} \equiv
	\bu_h^{(2)}$, hence we get the uniqueness and the existence of the solution.
	 
\end{proof}

\section{Error Estimate in Discrete $H^1$ Norm}\label{section:H1}
Define a semi-norm on $V_h$ as follows
\begin{equation}\label{EQ:norm}
	\3bar\bv\3bar =\left (\sum_{T\in {\cal T}_h}\|\nabla_w
	\bv\|_T^2+ h_T^{-1}\| Q_b\bv_0-\bv_b\|^2_{\partial
		T}\right )^{\frac{1}{2}},\quad \bv\in V_h.
\end{equation}

It is  easy to see  that $\3bar\cdot\3bar$ satisfies the following properties.
\begin{lemma}\label{lemmanorm}
	$\3bar\cdot\3bar$ is a norm in the weak finite element space $V^0_h$.
\end{lemma}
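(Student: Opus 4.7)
The semi-norm $\3bar\cdot\3bar$ is manifestly non-negative, positively homogeneous, and satisfies the triangle inequality (the latter follows from Minkowski's inequality applied termwise). Thus the only non-trivial property to verify is positive definiteness on $V_h^0$, i.e.\ that $\3bar\bv\3bar = 0$ implies $\bv = \{\mathbf{0},\mathbf{0}\}$. My plan is to essentially replay the relevant portion of the uniqueness argument from the preceding theorem, isolated into a standalone statement about the semi-norm.

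Assume $\bv \in V_h^0$ with $\3bar\bv\3bar = 0$. Since the semi-norm is a sum of squares, each term must vanish: $\nabla_w \bv = \mathbf{0}$ on every $T \in \T_h$ and $Q_b \bv_0 - \bv_b = \mathbf{0}$ on every $\partial T$. I would then invoke the definition of the discrete weak gradient (\ref{weak-gradient}) and integrate by parts exactly as in the theorem's proof to obtain, for any $\tau \in [P_0(T)]^{d\times d}$,
\begin{equation*}
0 = (\nabla_w \bv, \tau)_T = (\nabla \bv_0, \tau)_T - \langle Q_b \bv_0 - \bv_b, \tau \bn\rangle_{\partial T} = (\nabla \bv_0, \tau)_T.
\end{equation*}
Choosing $\tau = \nabla \bv_0$, which is admissible because $\bv_0 \in [P_1(T)]^d$ makes $\nabla \bv_0$ a constant matrix on $T$, yields $\nabla \bv_0 = \mathbf{0}$ on $T$. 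Hence $\bv_0$ is a constant vector on each element.

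The remaining step is to propagate this elementwise-constant information globally using the two remaining facts: $Q_b \bv_0 = \bv_b$ on every $\partial T$, and $\bv_b = \mathbf{0}$ on $\Gamma$ from the definition of $V_h^0$. Since $\bv_0$ is constant on each $T$, $Q_b \bv_0 = \bv_0|_T$ on every edge/face of $T$, so $\bv_b = \bv_0|_T$ on each face of $T$. For any element $T$ with a boundary face $e \subset \Gamma$, this forces $\bv_0|_T = \bv_b|_e = \mathbf{0}$, and therefore $\bv_b = \mathbf{0}$ on all faces of $T$. Walking through the mesh via shared interior faces (on which $\bv_b$ is single-valued and equals the constant value of $\bv_0$ on both adjacent elements), one concludes that $\bv_0 \equiv \mathbf{0}$ throughout $\Omega$, and consequently $\bv_b \equiv \mathbf{0}$ on $\E_h$.

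The mildly delicate step is the mesh-connectivity propagation at the end; it is routine but worth spelling out because it is the place where the boundary condition in $V_h^0$ is essential — without it, constant-per-element $\bv_0$ values agreeing with the single-valued $\bv_b$ on interfaces would only force global constancy, not vanishing. Everything else is an immediate consequence of the already-established computations in the uniqueness proof, so the argument is short.
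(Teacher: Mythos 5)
Your proof is correct and follows essentially the same route the paper uses: the paper states this lemma without a standalone proof, but the identical argument (vanishing of each squared term, testing the weak gradient against $\tau=\nabla\bv_0$ to get elementwise-constant $\bv_0$, then propagating $Q_b\bv_0=\bv_b$ through the mesh together with $\bv_b=\mathbf{0}$ on $\Gamma$) appears verbatim inside the proof of the uniqueness theorem for the WG scheme. Your added remark about where the boundary condition of $V_h^0$ is essential is accurate but does not change the substance.
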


\begin{lemma}\label{lemmacoerv}
	Let $\alpha=\min\{\mu,1\}$, then
	\begin{equation}\label{EQ:coercivity}
		a_s(\bv,\bv)\geq \alpha\3bar \bv
		\3bar^2, \qquad \forall \ \bv\in V_h^0.
	\end{equation}
\end{lemma}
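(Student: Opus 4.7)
The proof plan here is short because the inequality almost drops out of the definitions. First, I would expand $a_s(\bv,\bv)$ using the definitions (\ref{EQ:stabilizer})--(\ref{EQ:bilinearForm}) to obtain the three nonnegative contributions
\[
a_s(\bv,\bv) = \mu\sum_{T\in\T_h}\|\nabla_w\bv\|_T^2 + (\lambda+\mu)\sum_{T\in\T_h}\|\nabla_w\cdot\bv\|_T^2 + \sum_{T\in\T_h} h_T^{-1}\|Q_b\bv_0-\bv_b\|_{\partial T}^2.
\]
Each of these summands is nonnegative since $\mu \ge \mu_1 > 0$ and $\lambda > 0$ by hypothesis on the Lam\'e constants.

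Next I would simply discard the middle term, which is allowed because $(\lambda+\mu)\ge 0$, to arrive at
\[
a_s(\bv,\bv)\ \geq\ \mu\sum_{T\in\T_h}\|\nabla_w\bv\|_T^2 + \sum_{T\in\T_h} h_T^{-1}\|Q_b\bv_0-\bv_b\|_{\partial T}^2.
\]
Then I would bound the coefficients $\mu$ and $1$ below by $\alpha=\min\{\mu,1\}$ and factor $\alpha$ out of the two sums. Comparing with the definition (\ref{EQ:norm}) of $\3bar\cdot\3bar$ yields $a_s(\bv,\bv)\ge \alpha\3bar\bv\3bar^2$, as claimed.

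There is no real obstacle in this lemma; the only subtle point is that membership in $V_h^0$ is not actually needed to get the inequality (positivity of each term is enough), though the restriction to $V_h^0$ is essential for $\3bar\cdot\3bar$ to be a norm as stated in Lemma \ref{lemmanorm}. I would therefore keep the statement phrased on $V_h^0$ for consistency and not belabor this point.
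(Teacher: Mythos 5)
Your proof is correct and is exactly the argument the paper intends (the lemma is stated without proof there, as it follows immediately from the definitions): expand $a_s(\bv,\bv)$, discard the nonnegative $(\lambda+\mu)\|\nabla_w\cdot\bv\|^2$ term, and bound the remaining coefficients below by $\alpha=\min\{\mu,1\}$. Your side remark that positivity holds on all of $V_h$ while $V_h^0$ is only needed for the norm property is also accurate.
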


The following results have been proved in \cite{wy1302}.

\begin{lemma}\cite{wy1302}
	For any $\bv=\{\bv_{0},\bv_{b}\}\in V_h$, there exist $C>0$ such that
	\begin{equation}\label{EQ:v0}
		\sum_{T\in {\cal T}_h}\|\nabla\bv_{0}\|_T^2\leq C \3bar\bv\3bar^2.
	\end{equation}
	Furthermore, we have
	\begin{equation}\label{EQ:v0vb}
		\sum_{T\in {\cal T}_h}h_T^{-1}\|\bv_{0}-\bv_{b}\|_{\partial
			T}^2 \leq C \3bar\bv\3bar^2.
	\end{equation}
\end{lemma}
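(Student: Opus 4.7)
The plan is to establish both bounds by first controlling the difference between the classical gradient $\nabla\bv_0$ and the discrete weak gradient $\nabla_w\bv$ on each element $T$, and then passing from a boundary bound on $Q_b\bv_0-\bv_b$ to one on $\bv_0-\bv_b$ via the approximation properties of $Q_b$.

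For the first inequality (\ref{EQ:v0}), I would start from the definition (\ref{weak-gradient}) and perform integration by parts. Since $\bv_0\in [P_1(T)]^d$ we have $\nabla\bv_0\in [P_0(T)]^{d\times d}$, and for any $\varphi\in [P_0(T)]^{d\times d}$, $\nabla\cdot\varphi=0$, giving
\begin{equation*}
(\nabla_w\bv-\nabla\bv_0,\varphi)_T=\langle \bv_b-\bv_0,\varphi\bn\rangle_{\partial T}=\langle \bv_b-Q_b\bv_0,\varphi\bn\rangle_{\partial T},
\end{equation*}
where the second equality uses that $\varphi\bn$ is constant on each edge $e$. Taking $\varphi=\nabla_w\bv-\nabla\bv_0$, applying Cauchy--Schwarz on the boundary term, and using the inverse/trace estimate $\|\varphi\|_{\partial T}\le C h_T^{-1/2}\|\varphi\|_T$ (valid since $\varphi$ is piecewise constant), I obtain
\begin{equation*}
\|\nabla_w\bv-\nabla\bv_0\|_T \le C h_T^{-1/2}\|Q_b\bv_0-\bv_b\|_{\partial T}.
\end{equation*}
The triangle inequality $\|\nabla\bv_0\|_T\le\|\nabla_w\bv\|_T+\|\nabla_w\bv-\nabla\bv_0\|_T$, squared and summed over $T\in\T_h$, gives (\ref{EQ:v0}).

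For the second inequality (\ref{EQ:v0vb}), I would use the splitting
\begin{equation*}
\|\bv_0-\bv_b\|_{\partial T}\le \|\bv_0-Q_b\bv_0\|_{\partial T}+\|Q_b\bv_0-\bv_b\|_{\partial T}.
\end{equation*}
The second term, scaled by $h_T^{-1}$ and summed, is bounded by $\3bar\bv\3bar^2$ directly from the definition (\ref{EQ:norm}). For the first term, since $\bv_0$ is linear and $Q_b$ is the $L^2$-projection onto constants on each edge, a Poincar\'e inequality along each edge $e\subset\partial T$ yields $\|\bv_0-Q_b\bv_0\|_e \le C h_e\|\partial_s\bv_0\|_e$, followed by a trace inequality for the polynomial $\nabla\bv_0$ to arrive at $\|\bv_0-Q_b\bv_0\|_{\partial T}\le C h_T^{1/2}\|\nabla\bv_0\|_T$. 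After dividing by $h_T$, summing over elements, and invoking the already-established (\ref{EQ:v0}), I recover (\ref{EQ:v0vb}).

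The main obstacle is the derivation of the identity linking $\nabla_w\bv-\nabla\bv_0$ to the jump $Q_b\bv_0-\bv_b$ on $\partial T$; once that identity is in hand, everything else is a chain of scaling and trace estimates on polynomial spaces of fixed degree.
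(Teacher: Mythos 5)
Your proof is correct: the identity $(\nabla_w\bv-\nabla\bv_0,\varphi)_T=\langle \bv_b-Q_b\bv_0,\varphi\bn\rangle_{\partial T}$ for constant $\varphi$, followed by the inverse-trace estimate and the triangle inequality, and then the edge-wise splitting through $Q_b\bv_0$ for the second bound, is exactly the standard argument. The paper itself does not reproduce a proof (it cites \cite{wy1302}), but your derivation matches the approach of that reference, so there is nothing further to add.
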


For each element $T\in {\cal T}_h$, $Q_0$ denotes the $L^2$
projection operator onto $[P_1(T)]^d$.
$Q_h \textbf{u}:= \{Q_0\textbf{u}, Q_b \textbf{u}\}$ represents the $L^2$
projection  operator onto $V_h$, where $Q_b$ is the $L^2$ projection onto $[P_0(e)]^d$.
Besides, denote by ${\cal Q}_h$ and $\textbf{Q}_h$ the $L^2$
projection operator onto $P_{0}(T)$ and $[P_{0}(T )]^{d\times d}$, respectively.

\begin{lemma}\cite{wy1302} \label{lemmaProjection} For any $\bv \in [H^1(\Omega)]^d$, the projection operators $Q_h$, ${\emph {\textbf{Q}} }_h$, and ${\cal Q}_h$satisfies the following properties:
	\begin{eqnarray}\label{EQ:projection_gradient}
		\nabla_w (Q_h \bv)&=&{\emph {\textbf{Q}} }_h (\nabla \bv),\\
		\nabla_w \cdot(Q_h \bv)&=&{\cal Q}_h (\nabla \cdot \bv).\label{EQ:projection_div}
	\end{eqnarray}
\end{lemma}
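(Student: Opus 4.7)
The plan is to verify each identity separately by testing against the appropriate space of polynomial test functions and then integrating by parts. In both cases the key simplification comes from the fact that the test spaces are $P_0(T)$ and $[P_0(T)]^{d\times d}$ respectively, so that gradients and divergences of test functions vanish and the $L^2$ projection $Q_b$ on boundaries can be removed against piecewise constant traces.

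For the divergence identity $\nabla_w\cdot(Q_h\bv)=\mathcal{Q}_h(\nabla\cdot\bv)$, I would fix an arbitrary $\phi\in P_0(T)$ and compute $(\nabla_{w,0,T}\cdot(Q_h\bv),\phi)_T$ using the defining relation \eqref{weak-divergence}, which gives $\langle Q_b\bv\cdot\bn,\phi\rangle_{\partial T}$. Since $\phi\bn$ is constant on each edge $e\subset\partial T$, the definition of $Q_b$ as the $L^2$ projection onto $[P_0(e)]^d$ lets me replace $Q_b\bv$ by $\bv$ on the boundary, yielding $\langle \bv\cdot\bn,\phi\rangle_{\partial T}$. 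An integration by parts on $T$ then gives $(\nabla\cdot\bv,\phi)_T+(\bv,\nabla\phi)_T$, and the second term drops because $\phi$ is constant. Finally, $(\nabla\cdot\bv,\phi)_T=(\mathcal{Q}_h(\nabla\cdot\bv),\phi)_T$ since $\phi\in P_0(T)$, completing the identity by the arbitrariness of $\phi$.

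For the gradient identity $\nabla_w(Q_h\bv)=\mathbf{Q}_h(\nabla\bv)$ the argument is essentially parallel. I would test against an arbitrary $\varphi\in[P_0(T)]^{d\times d}$ in \eqref{weak-gradient} to obtain $\langle Q_b\bv,\varphi\bn\rangle_{\partial T}$, then use that $\varphi\bn$ is edge-wise constant to drop the $Q_b$ and recover $\langle\bv,\varphi\bn\rangle_{\partial T}$. Integration by parts produces $(\nabla\bv,\varphi)_T+(\bv,\nabla\cdot\varphi)_T$, and the second term vanishes because $\varphi$ is constant. Since $\varphi\in[P_0(T)]^{d\times d}$, the first term equals $(\mathbf{Q}_h(\nabla\bv),\varphi)_T$, and the identity follows.

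There is really no substantial obstacle here; the lemma is essentially a commutative-diagram statement that follows from the definitions once integration by parts is applied correctly. The only point requiring a little care is the interplay between $Q_b$ and the test function on $\partial T$: one must observe that $\phi$ and each component of $\varphi\bn$ are constants on each edge/face, which is exactly what is needed to eliminate $Q_b$ in the boundary pairing. Once that observation is made, both identities reduce to a single integration by parts and an invocation of the $L^2$-projection property.
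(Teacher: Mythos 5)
Your proof is correct: both identities follow exactly as you describe, by testing against constant $\phi$ or $\varphi$, using that $\phi\bn$ and $\varphi\bn$ are edgewise/facewise constant (so $Q_b$ can be dropped in the boundary pairing on a simplicial mesh), integrating by parts, and invoking the definitions of $\mathcal{Q}_h$ and $\mathbf{Q}_h$. The paper itself does not prove this lemma but cites it from \cite{wy1302}, and your argument is the standard one given there, so there is nothing to compare against or to correct.
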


\begin{lemma} The displacement reconstruction operator ${\emph {\textbf {R}}}_h$ is divergence-preserving, i.e., for all $\bv=\{\bv_{0},\bv_{b}\}\in V_h$, it satisfies
	\begin{equation}\label{RTProp}
		\nabla\cdot  ({\emph {\textbf {R}}}_h(\bv))=\nabla_{w}\cdot\bv.
	\end{equation}
\end{lemma}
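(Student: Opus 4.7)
The proof plan is to work element by element. Fix $T \in \mathcal{T}_h$ and observe that both sides of (\ref{RTProp}) are piecewise constants on $T$: for $\nabla_w \cdot \bv$ this is by the definition in (\ref{weak-divergence}), while for $\nabla \cdot \mathbf{R}_h(\bv)$ it follows from $\mathbf{R}_h(\bv)|_T \in RT_0(T) = [P_0(T)]^d + \bx P_0(T)$, whose divergence is a constant. Thus it suffices to check equality against a single test function, or equivalently to show the $L^2(T)$ inner product of both sides against an arbitrary $\phi \in P_0(T)$ agrees.

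The key step is a standard integration by parts: for any $\phi \in P_0(T)$,
\begin{equation*}
(\nabla \cdot \mathbf{R}_h(\bv), \phi)_T = -(\mathbf{R}_h(\bv), \nabla \phi)_T + \langle \mathbf{R}_h(\bv) \cdot \bn, \phi \rangle_{\partial T} = \langle \mathbf{R}_h(\bv) \cdot \bn, \phi \rangle_{\partial T},
\end{equation*}
since $\nabla \phi = 0$. Now invoking the defining property (\ref{RT}) of $\mathbf{R}_h$, namely $\mathbf{R}_h(\bv) \cdot \bn_e = \bv_b \cdot \bn_e$ on every $e \subset \partial T$, the boundary term becomes $\langle \bv_b \cdot \bn, \phi \rangle_{\partial T}$. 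Comparing with the definition (\ref{weak-divergence}) of the discrete weak divergence, this is exactly $(\nabla_w \cdot \bv, \phi)_T$.

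Hence $(\nabla \cdot \mathbf{R}_h(\bv) - \nabla_w \cdot \bv, \phi)_T = 0$ for every $\phi \in P_0(T)$, and since both quantities lie in $P_0(T)$, taking $\phi$ equal to their difference gives (\ref{RTProp}) on $T$. Gathering over all $T \in \mathcal{T}_h$ finishes the proof. There is really no serious obstacle here; the argument is essentially a direct unpacking of the two definitions, and the only thing to verify is that the normal trace condition (\ref{RT}) matches exactly the quantity appearing in the weak divergence formula.
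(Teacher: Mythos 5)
Your argument is exactly the paper's proof: integrate by parts against a constant test function $\phi\in P_0(T)$, use $\nabla\phi=0$ and the normal-trace condition (\ref{RT}) to identify the boundary term with $\langle \bv_b\cdot\bn,\phi\rangle_{\partial T}$, and conclude from the fact that both $\nabla\cdot\bRT(\bv)$ and $\nabla_w\cdot\bv$ lie in $P_0(T)$. The proposal is correct and takes essentially the same route as the paper.
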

\begin{proof}
	For any $q\in P_0(T)$, we have $\nabla q=\textbf{0}$. Together with the definition of the weak divergence (\ref{weak-divergence}) and the definition of the reconstruction operator $\textbf{R}_h$ (\ref{RT}), we obtain
	\begin{align*}
		(\nabla\cdot (\bRT(\bv)),q)_T&=-(\bRT(\bv),\nabla q)_T + \langle \bRT(\bv) \cdot  \bn, q\rangle_{\partial T}\\
		&= \langle \bv_{b} \cdot  \bn, q\rangle_ {\partial T} \\
		&= (\nabla_w \cdot  \bv, q)_T.
	\end{align*}
	Next, using the facts that $\nabla\cdot (\bRT(\bv))\in P_0(T)$ and $\nabla_{w}\cdot\bv\in P_0(T)$, we immediately get (\ref{RTProp}). This completes the proof.
	 
\end{proof}

\begin{lemma} \cite{MuStkoes2020,wangMuStkoes2021} For any $\bv=\{\bv_{0},\bv_{b}\}\in V_h$, there exist $C>0$ such that
	\begin{equation}\label{RTEs}
		\sum_{T\in{\cal	 T}_h}\|{\emph {\textbf {R}}}_T(\bv)-\bv_{0}\|^2_T \leq C\sum_{T\in{\cal T}_h}h_T\|Q_b\bv_0-\bv_b\|^2_{\partial T}\leq Ch^2\3bar \bv\3bar^2.
	\end{equation}
\end{lemma}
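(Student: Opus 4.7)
The plan is to argue locally on each $T$, leveraging that $RT_0(T)$ is $(d+1)$-dimensional and that its elements are uniquely determined by their $d+1$ face fluxes. First I would establish the auxiliary scaling estimate
$$
\|\phi\|_T^2 \le C\, h_T \sum_{e\subset\partial T}\|\phi\cdot\bn_e\|_e^2, \qquad \forall\, \phi \in RT_0(T),
$$
by passing to a fixed reference element $\hat T$, invoking equivalence of norms on the finite-dimensional space $RT_0(\hat T)$ (the right-hand side is easily seen to be a norm there, since the face fluxes are unisolvent for $RT_0$), and pulling back via the Raviart--Thomas Piola transform, together with $|T|\sim h_T^d$ and $|e|\sim h_T^{d-1}$, to produce the explicit $h_T$ factor.

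Next I would compare $\bRT(\bv)$ with the canonical $RT_0$ interpolant $\Pi_{RT}\bv_0$, whose normal flux on each face $e$ is $|e|^{-1}\int_e \bv_0 \cdot \bn_e\, ds$. Because $\bn_e$ is constant along $e$, this coincides with $(Q_b\bv_0)\cdot \bn_e$, so that $\bRT(\bv) - \Pi_{RT}\bv_0$ lies in $RT_0(T)$ and carries normal flux $(\bv_b - Q_b\bv_0)\cdot \bn_e$ on each face. Applying the auxiliary estimate with $\phi = \bRT(\bv) - \Pi_{RT}\bv_0$ gives at once
$$
\|\bRT(\bv) - \Pi_{RT}\bv_0\|_T^2 \le C\, h_T \|Q_b\bv_0 - \bv_b\|_{\partial T}^2.
$$
Writing $\bRT(\bv) - \bv_0 = [\bRT(\bv) - \Pi_{RT}\bv_0] + [\Pi_{RT}\bv_0 - \bv_0]$, the second bracket is the standard $RT_0$ interpolation residue bounded by $C h_T |\bv_0|_{1,T}$, which via (\ref{EQ:v0}) can be traded back against $h\3bar\bv\3bar$.

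The second inequality then drops out of the definition (\ref{EQ:norm}) of $\3bar\cdot\3bar$ together with $h_T \le h$:
$$
\sum_T h_T\|Q_b\bv_0-\bv_b\|_{\partial T}^2 \le h^2 \sum_T h_T^{-1}\|Q_b\bv_0-\bv_b\|_{\partial T}^2 \le h^2 \3bar\bv\3bar^2.
$$
The principal technical obstacle I foresee is tracking the Piola scaling precisely so that the $h_T$ factor in the auxiliary estimate is correct; a secondary subtlety is that the residue $\Pi_{RT}\bv_0-\bv_0$ is not obviously absorbable into the face-flux bound alone, so the estimate is cleanest to present as a chain ending in $Ch^2\3bar\bv\3bar^2$, which is the form that will actually be used downstream in the error analysis.
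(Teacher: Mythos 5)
The paper offers no proof of this lemma at all---it is imported by citation from \cite{MuStkoes2020,wangMuStkoes2021}---so yours is the only argument on the table. Your route is the standard and correct one: the scaling estimate $\|\phi\|_T^2\le Ch_T\sum_{e\subset\pT}\|\phi\cdot\bn_e\|_e^2$ for $\phi\in RT_0(T)$ via norm equivalence on the reference element and the Piola map, the observation that $(Q_b\bv_0)\cdot\bn_e$ equals the mean of $\bv_0\cdot\bn_e$ because $\bn_e$ is constant on a flat face (so $\bRT(\bv)-\Pi_{RT}\bv_0\in RT_0(T)$ has fluxes $(\bv_b-Q_b\bv_0)\cdot\bn_e$), and the interpolation bound $\|\Pi_{RT}\bv_0-\bv_0\|_T\le Ch_T\|\nabla\bv_0\|_T$ combined with (\ref{EQ:v0}). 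This yields $\sum_{T}\|\bRT(\bv)-\bv_0\|_T^2\le Ch^2\3bar\bv\3bar^2$, which is the only form of the estimate the paper actually uses (in the bound (\ref{A6}) for $\theta_\bw$ and in (\ref{e0st59})).

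The ``secondary subtlety'' you flag is more than presentational: the intermediate inequality $\sum_T\|\bRT(\bv)-\bv_0\|_T^2\le C\sum_T h_T\|Q_b\bv_0-\bv_b\|_{\pT}^2$ is actually false in this paper's setting, so you were right not to chase it. Take $T$ the reference triangle, $\bv_0=(y,0)\in[P_1(T)]^2$ and $\bv_b=Q_b\bv_0$ on every edge: the right-hand side vanishes, yet $\bRT(\bv)=\Pi_{RT}\bv_0=(1/2,0)\ne\bv_0$, since $[P_1(T)]^d\not\subset RT_0(T)$ and the $RT_0$ interpolant cannot reproduce a general linear field. The correct local statement is $\|\bRT(\bv)-\bv_0\|_T^2\le C\bigl(h_T\|Q_b\bv_0-\bv_b\|_{\pT}^2+h_T^2\|\nabla\bv_0\|_T^2\bigr)$, which is exactly what your decomposition produces and which still sums to $Ch^2\3bar\bv\3bar^2$. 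In short: your proof establishes the usable content of the lemma completely; the middle link of the displayed chain is an overstatement in the transcription, not a gap in your argument.
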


\begin{lemma}\label{lemmaA1} \cite{wy1202} Let ${\cal T}_h$ be a shape regular partition \cite{wy1202} of $\,\Omega$. For any  $\bw\in [H^2 (\Omega)]^ d$, $\rho \in H^1 (\Omega)$, and $0 \leq m \leq 1$, we have
	\begin{align}\label{A1}
		\sum_{T\in{\cal
				T}_h}h_T^{2m}\|\bw-Q_0\bw\|^2_{T,m}&\leq  C
		h^4\|\bw\|^2_2,\\
		\sum_{T\in{\cal T}_h}h_T^{2m}\|
		\nabla\bw-{\emph {\textbf{Q}} }_h\nabla\bw\|^2_{T,m}&\leq
		Ch^2\|\bw\|^2_2,\label{A2}\\
		\sum_{T\in{\cal T}_h}h_T^{2m}\|\rho-{\cal Q}_h\rho\|^2_{T,m}&\leq
		Ch^2\|\rho\|^2_1.\label{A3}
	\end{align}
\end{lemma}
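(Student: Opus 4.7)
The plan is to establish each estimate by the classical Bramble--Hilbert lemma together with an affine scaling argument to a reference element $\widehat{T}$. Since $Q_0$, $\mathbf{Q}_h$, and $\mathcal{Q}_h$ are local $L^2$ projections onto polynomial spaces, the standard interpolation/projection theory will supply the element-local bounds; shape regularity of $\mathcal{T}_h$ then guarantees that the constants arising from the affine transformation are uniform in $T$, and summing over elements delivers the global statement.

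First, for (\ref{A1}), I would map $T$ to $\widehat{T}$ by an affine bijection and observe that the reference-element operator $\mathrm{id}-\widehat{Q}_0$ annihilates $[P_1(\widehat{T})]^d$. Bramble--Hilbert then yields $\|\widehat{\bw}-\widehat{Q}_0\widehat{\bw}\|_{m,\widehat{T}}\le C|\widehat{\bw}|_{2,\widehat{T}}$ for $m\in\{0,1\}$. Scaling back gives $\|\bw-Q_0\bw\|_{m,T}\le Ch_T^{2-m}|\bw|_{2,T}$, so multiplication by $h_T^{2m}$ produces $h_T^{2m}\|\bw-Q_0\bw\|_{m,T}^2\le Ch_T^{4}|\bw|_{2,T}^2$, and summation over $T\in\mathcal{T}_h$ gives the bound $Ch^4\|\bw\|_2^2$.

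For (\ref{A2}) and (\ref{A3}), the projections $\mathbf{Q}_h$ and $\mathcal{Q}_h$ take values in piecewise constants. Applying Bramble--Hilbert on $\widehat{T}$ with $P_0$ annihilated and rescaling gives $\|v-\mathbf{Q}_h v\|_{0,T}\le Ch_T|v|_{1,T}$; since a constant has vanishing first derivatives, $|v-\mathbf{Q}_h v|_{1,T}=|v|_{1,T}$, so $\|v-\mathbf{Q}_h v\|_{1,T}^2\le C(1+h_T^2)|v|_{1,T}^2$. Substituting $v=\nabla\bw$ (using $|\nabla\bw|_{1,T}=|\bw|_{2,T}$), multiplying by $h_T^{2m}$, and summing produces $Ch^2\|\bw\|_2^2$; the same argument with $v=\rho$ gives (\ref{A3}).

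The only real obstacle is the treatment of fractional $m\in(0,1)$, for which $\|\cdot\|_{T,m}$ must be interpreted via real or complex interpolation between $L^2(T)$ and $H^1(T)$; the two integer estimates then have to be interpolated to obtain the intermediate bound, and one must verify that the interpolation constants are stable under the affine scaling. Once this is in place, the shape-regularity hypothesis ensures that $h_T^d$ and the Jacobian factors combine harmlessly, and the final sum is dominated by the global Sobolev norm on $\Omega$, completing the proof.
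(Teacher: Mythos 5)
Your argument is correct and is exactly the standard one: the paper itself gives no proof of this lemma, citing it directly from \cite{wy1202}, where it is established by the same Bramble--Hilbert plus affine-scaling reasoning you describe (local $L^2$ projections onto $[P_1]^d$ and piecewise constants, with shape regularity making the scaled constants uniform). Your accounting of the powers of $h_T$ checks out in each case, and your remark on fractional $m$ is a fair point of care, though in practice only $m\in\{0,1\}$ is ever invoked in this paper, so the two integer cases suffice.
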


Let $\bu\in [H^1_0(\Omega)]^d$ be the exact solution of (\ref{model-weak}) and $\bu_h=	\{\bu_0, \bu_b\} \in V_h^0$ be the WG solution to (\ref{WGA_primal}), define the error
\begin{align}\label{EQ:errorfunction}
	\be_h&=\{\be_0,\be_b\}=\{Q_0\bu-\bu_0,
	Q_b\bu-\bu_b\}.
\end{align}

\begin{lemma}\label{LemErrorEq}The error function
	$\be_h$ satisfies the
	following error equation
	\begin{align}\label{EQ:errorequation}
		a_s(\be_h,\bv)&=
		\varphi_{\bu}(\bv), \qquad \forall \,\bv\in V_h^0,
	\end{align}
	where
	\begin{eqnarray*}
		\varphi_{\bu}(\bv) &=& \ell_\bu(\bv)-
		\theta_\bu(\bv)+ s(Q_h\bu,\bv),\\
		\ell_\bu(\bv) &=&\mu \sum_{T\in {\cal T}_h} \langle(\nabla\bu-{\emph {\textbf{Q}} }_h\nabla\bu)\bn,\bv_{0}-\bv_{b}\rangle_\pT,\\
		\theta_\bu(\bv) &=& \mu\sum_{T\in {\cal T}_h}(\Delta\bu,\bv_0-{\emph{\textbf{R}}}_T(\bv))_T.
	\end{eqnarray*}
\end{lemma}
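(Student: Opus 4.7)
The plan is to start from the identity $a_s(\be_h,\bv)=a_s(Q_h\bu,\bv)-a_s(\bu_h,\bv)$ and use the scheme (\ref{WGA_primal}) to replace $a_s(\bu_h,\bv)$ by $(\bbf,\bRT\text{-style test})$, namely $(\bbf,\mathbf{R}_h(\bv))$. The goal then reduces to showing
\begin{equation*}
a(Q_h\bu,\bv)-(\bbf,\mathbf{R}_h(\bv))=\ell_\bu(\bv)-\theta_\bu(\bv),
\end{equation*}
after which adding $s(Q_h\bu,\bv)$ to both sides finishes the proof.

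First I would expand $a(Q_h\bu,\bv)$ using the commuting properties (\ref{EQ:projection_gradient})--(\ref{EQ:projection_div}) to get $\mu(\mathbf{Q}_h\nabla\bu,\nabla_w\bv)+(\lambda+\mu)(\mathcal{Q}_h\nabla\cdot\bu,\nabla_w\cdot\bv)$. On the right-hand side, I would substitute the strong equation $\bbf=-\mu\Delta\bu-(\lambda+\mu)\nabla(\nabla\cdot\bu)$ and integrate by parts globally: since $\mathbf{R}_h(\bv)\in H(div;\Omega)$ and $\mathbf{R}_h(\bv)\cdot\bn|_\Gamma=\bv_b\cdot\bn|_\Gamma=0$, one obtains
\begin{equation*}
-(\nabla(\nabla\cdot\bu),\mathbf{R}_h(\bv))=(\nabla\cdot\bu,\nabla\cdot\mathbf{R}_h(\bv)).
\end{equation*}
This is the key step where the divergence-preserving identity (\ref{RTProp}) comes in: $\nabla\cdot\mathbf{R}_h(\bv)=\nabla_w\cdot\bv\in P_0(T)$, so $(\nabla\cdot\bu,\nabla\cdot\mathbf{R}_h(\bv))=(\mathcal{Q}_h\nabla\cdot\bu,\nabla_w\cdot\bv)$. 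Thus the $(\lambda+\mu)$ contribution from $a(Q_h\bu,\bv)$ exactly cancels the $(\lambda+\mu)$ contribution from $(\bbf,\mathbf{R}_h(\bv))$. This cancellation is the source of the locking-free behavior and is precisely the point of introducing $\mathbf{R}_h$.

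It remains to handle the $\mu$-terms. For each $T$, I would rewrite $(\mathbf{Q}_h\nabla\bu,\nabla_w\bv)_T$ by first observing (from the weak-gradient definition applied to the constant tensor $\mathbf{Q}_h\nabla\bu$ and then undoing an integration by parts against $\bv_0$) the identity
\begin{equation*}
(\mathbf{Q}_h\nabla\bu,\nabla_w\bv)_T=-(\bv_0,\Delta\bu)_T+\langle\bv_0-\bv_b,(\nabla\bu-\mathbf{Q}_h\nabla\bu)\bn\rangle_{\partial T}+\langle\bv_b,(\nabla\bu)\bn\rangle_{\partial T}.
\end{equation*}
Summing over $T$, the last term vanishes because $\nabla\bu$ is single-valued across interior faces (while $\bn$ flips) and $\bv_b|_\Gamma=\mathbf{0}$. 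The first term sums to $-(\Delta\bu,\bv_0)$ and the middle term assembles into $\mu^{-1}\ell_\bu(\bv)$. Combining with the $\mu(\Delta\bu,\mathbf{R}_h(\bv))$ piece left over from the right-hand side yields $\ell_\bu(\bv)-\mu(\Delta\bu,\bv_0-\mathbf{R}_h(\bv))=\ell_\bu(\bv)-\theta_\bu(\bv)$, as desired. The main obstacle I foresee is bookkeeping the boundary terms in the elementwise IBP so that the inter-element jumps in $\bv_b$ cancel cleanly and only the consistency residuals $\nabla\bu-\mathbf{Q}_h\nabla\bu$ and $\bv_0-\mathbf{R}_h(\bv)$ survive; everything else is algebraic manipulation once the key $\lambda$-cancellation via (\ref{RTProp}) is in place.
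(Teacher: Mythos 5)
Your proposal is correct and follows essentially the same route as the paper: compute $a_s(Q_h\bu,\bv)$ via the commuting projection identities, use the divergence-preserving property (\ref{RTProp}) together with the normal continuity of $\mathbf{R}_h(\bv)$ to cancel the $(\lambda+\mu)$ terms against the right-hand side tested with $\mathbf{R}_h(\bv)$, and handle the $\mu$ terms by elementwise integration by parts with the $\langle\bv_b,(\nabla\bu)\bn\rangle_{\partial T}$ contributions cancelling across interior faces. The only difference is cosmetic bookkeeping (you fold the paper's two identities (\ref{EQ:weakgradientQh}) and (\ref{EQ:laplaceu_v0}) into a single per-element identity, and phrase the $(\lambda+\mu)$ integration by parts globally rather than elementwise), so no further comparison is needed.
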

\begin{proof}
	Using (\ref{EQ:projection_gradient}), (\ref{weak-gradient}), integration by parts, and the definition of $\textbf{Q}_h$, we have
	\begin{equation}\label{EQ:weakgradientQh}
		\begin{split}
			& (\nabla_w(Q_h\bu), \nabla_w\bv)_T\\
			=& ( \textbf{Q}_h \nabla\bu,\nabla_w\bv)_T\\
			=&  -(\bv_0, \nabla\cdot (\textbf{Q}_h \nabla\bu
			))_T+\langle \bv_b, (\textbf{Q}_h\nabla\bu)\bn\rangle_{\partial T}\\
			=&  ( \nabla \bv_0, \textbf{Q}_h\nabla\bu)_T-\langle
			\bv_0 - \bv_b, (\textbf{Q}_h\nabla\bu)\bn\rangle_{\partial T}\\
			=&  (\nabla\bv_0, \nabla\bu) _T-\langle \bv_0 - \bv_b
			, (\textbf{Q}_h\nabla\bu)\bn\rangle_{\partial T}.
		\end{split}
	\end{equation}
	Note that $\nabla\cdot\bu$ is continuous and $\bRT(\bv)$ has normal continuity across $e\in {\cal E}_h$, we get
	$$\sum_{T\in {\cal T}_h}\l\nabla\cdot\bu,\bRT(\bv)\cdot \bn\r_{\partial T}=0.$$
	Combining the above with (\ref{EQ:projection_div}), the properties of operator $\bRT$ in (\ref{RTProp}), and the definition of ${\cal Q}_h$, we obtain
	\begin{equation}\label{EQ:weakdivQh}
		\begin{split}
			&\sum_{T\in {\cal T}_h} (\nabla_w\cdot(Q_h\bu), \nabla_w\cdot\bv)_T\\
			=&\sum_{T\in {\cal T}_h} ({\cal Q}_h (\nabla\cdot\bu),\nabla_w\cdot\bv)_T\\
			=&\sum_{T\in {\cal T}_h}({\cal Q}_h (\nabla\cdot\bu),\nabla\cdot\bRT(\bv))_T\\
			=&\sum_{T\in {\cal T}_h}(\nabla\cdot\bu,\nabla\cdot\bRT(\bv))_T\\
			=&\sum_{T\in {\cal T}_h}(\nabla\cdot\bu,\nabla\cdot\bRT(\bv))_T-\sum_{T\in {\cal T}_h}\langle\nabla\cdot\bu,\bRT(\bv)\cdot \bn\rangle_{\partial T}\\
			=&-\sum_{T\in {\cal T}_h}(\nabla(\nabla\cdot\bu),\bRT(\bv))_T.
		\end{split}
	\end{equation}
	Next, testing (\ref{primal_model}) with $\bRT(\bv)$, we have
	\begin{equation}\label{EQ:f_RTv}
		\begin{split}
			&\sum_{T\in {\cal T}_h} (\bbf,\bRT(\bv))_T\\
			=&-\mu\sum_{T\in {\cal T}_h} (\Delta\bu,\bRT(\bv))_T-\sum_{T\in {\cal T}_h} (\lambda+\mu)(\nabla(\nabla\cdot\bu),\bRT(\bv))_T\\
			=&-\mu\sum_{T\in {\cal T}_h} (\Delta\bu,\bv_{0})_T+\mu\sum_{T\in {\cal T}_h} (\Delta\bu,\bv_{0}-\bRT(\bv))_T\\
			&-\sum_{T\in {\cal T}_h} (\lambda+\mu)(\nabla(\nabla\cdot\bu),\bRT(\bv))_T.
		\end{split}
	\end{equation}
	
	By using integration by parts, the fact that $\sum_{T\in{\cal T}_h}\langle\bv_{b},(\nabla\bu)\bn\rangle_\pT = 0$, and (\ref{EQ:weakgradientQh}), we get
	\begin{equation}\label{EQ:laplaceu_v0}
		\begin{split}
			& -\mu\sum_{T\in{\cal T}_h}(\Delta\bu,\bv_{0})_T\\
			=&\mu\sum_{T\in{\cal T}_h}(\nabla\bu,\nabla\bv_{0})_T-\mu\sum_{T\in{\cal T}_h}\langle\bv_{0},(\nabla\bu)\bn\rangle_\pT\\
			=&\mu\sum_{T\in{\cal T}_h}(\nabla\bu,\nabla\bv_{0})_T-\mu\sum_{T\in{\cal T}_h}\langle\bv_{0}-\bv_{b},(\nabla\bu)\bn\rangle_\pT\\
			=&\mu\sum_{T\in{\cal T}_h}(\nabla_w(Q_h\bu), \nabla_w\bv)_T+\mu\sum_{T\in{\cal T}_h}\langle \bv_0 - \bv_b,(\textbf{Q}_h\nabla\bu)\bn\rangle_{\partial T}\\
			&-\mu\sum_{T\in{\cal T}_h}\langle\bv_{0}-\bv_{b},(\nabla\bu)\bn\rangle_\pT\\
			=&\mu\sum_{T\in{\cal T}_h}(\nabla_w(Q_h\bu), \nabla_w\bv)_T-\mu\sum_{T\in{\cal T}_h}\langle \bv_0 - \bv_b,(\nabla\bu-\textbf{Q}_h\nabla\bu)\bn\rangle_{\partial T}.
		\end{split}
	\end{equation}
	From (\ref{EQ:weakdivQh}), (\ref{EQ:f_RTv}), and (\ref{EQ:laplaceu_v0}), we obtain
	\begin{equation*}
		\begin{split}
			&\mu\sum_{T\in{\cal T}_h}(\nabla_w(Q_h\bu), \nabla_w\bv)_T+(\lambda+\mu)\sum_{T\in{\cal T}_h}(\nabla_w\cdot(Q_h\bu), \nabla_w\cdot\bv)_T\\
			=& \sum_{T\in{\cal T}_h}(\bbf,\bRT(\bv))_T-\mu\sum_{T\in {\cal T}_h}(\Delta\bu,\bv_{0}-\bRT(\bv))_T\\
			&+\mu\sum_{T\in {\cal T}_h}\langle(\nabla\bu-\textbf{Q}_h\nabla\bu)\bn,\bv_{0}-\bv_{b}\rangle_\pT,
		\end{split}
	\end{equation*}
	which implies that
	\begin{equation}\label{EQ:asQhuv}
		\begin{split}
			a_s(Q_h\bu,\bv)=\sum_{T\in{\cal T}_h}(\bbf,\bRT(\bv))_T+\ell_\bu(\bv)-\theta_\bu(\bv)+s(Q_h\bu,\bv),
		\end{split}
	\end{equation}
	where
	\begin{eqnarray*}
		\ell_\bu(\bv) &=&\mu \sum_{T\in {\cal T}_h}\langle(\nabla\bu-\textbf{Q}_h\nabla\bu)\bn,\bv_{0}-\bv_{b}\rangle_\pT,\\
		\theta_\bu(\bv) &=&\mu \sum_{T\in {\cal T}_h}(\Delta\bu,\bv_0-\bRT(\bv))_T.
	\end{eqnarray*}
	Subtracting (\ref{WGA_primal}) from (\ref{EQ:asQhuv}), we get the error equation (\ref{EQ:errorequation}). This completes the proof.
	 
\end{proof}

Next, we estimate the terms in the right-hand side of the error equation (\ref{EQ:errorequation}).

\begin{lemma}\label{LemProEstimate} Let $\bw\in [H^2 (\Omega)]^ d$ and $\bv \in V_h$, we have
	\begin{align}\label{A4}
		|s(Q_h\bw,\bv)|\leq &
		Ch\|\bw\|_{2}\3bar\bv\3bar,\\
		|\ell_\bw(\bv)|\leq&
		Ch\|\bw\|_{2}\3bar\bv\3bar,\label{A5}\\
		|\theta_\bw(\bv)|\leq &
		Ch\|\bw \|_2\3bar\bv\3bar,\label{A6}
	\end{align}
	where the general constant $C>0$ is independent of $\lambda$.
\end{lemma}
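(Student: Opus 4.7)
All three bounds follow a common template: apply Cauchy--Schwarz to decouple the factor depending on $\bw$ from the factor depending on $\bv$, control the $\bw$-factor by the trace inequality combined with the approximation estimates of Lemma \ref{lemmaA1}, and control the $\bv$-factor by $\3bar\bv\3bar$ using the stabilizer-defining sum itself together with (\ref{EQ:v0vb}) and (\ref{RTEs}). The structurally decisive point is that $s$, $\ell_\bw$, and $\theta_\bw$ only carry the factor $\mu$, never $\lambda+\mu$; consequently no step of the argument can introduce $\lambda$, which is exactly what the locking-free conclusion downstream requires.

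For the stabilizer term, rewrite $Q_b Q_0\bw-Q_b\bw=Q_b(Q_0\bw-\bw)$ and apply Cauchy--Schwarz with the natural $h_T^{-1}$ weight, so the $\bv$-factor becomes precisely $\3bar\bv\3bar$. For the $\bw$-factor, use $L^2$-boundedness of $Q_b$ followed by the trace inequality $\|\phi\|_{\partial T}^2\le C(h_T^{-1}\|\phi\|_T^2+h_T\|\nabla\phi\|_T^2)$ applied to $\phi=Q_0\bw-\bw$; the $m=0$ and $m=1$ cases of (\ref{A1}) then jointly deliver $Ch^2\|\bw\|_2^2$. Taking square roots gives (\ref{A4}).

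The term $\ell_\bw(\bv)$ is handled identically in spirit, except that one now splits the weight as $h_T^{1/2}\cdot h_T^{-1/2}$ in Cauchy--Schwarz. The $\bv$-factor $(\sum_T h_T^{-1}\|\bv_0-\bv_b\|_{\partial T}^2)^{1/2}$ is bounded by $C\3bar\bv\3bar$ via (\ref{EQ:v0vb}); the $\bw$-factor $(\sum_T h_T\|\nabla\bw-\textbf{Q}_h\nabla\bw\|_{\partial T}^2)^{1/2}$ is bounded by $Ch\|\bw\|_2$ via the trace inequality and (\ref{A2}) with $m=0,1$. Pulling out the common $\mu$ prefactor yields (\ref{A5}).

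Finally, $\theta_\bw(\bv)$ is the easiest of the three: a single Cauchy--Schwarz in $L^2(T)$ peels off $\|\Delta\bw\|_T\le C\|\bw\|_{2,T}$, and (\ref{RTEs}) directly gives $(\sum_T\|\bv_0-\bRT(\bv)\|_T^2)^{1/2}\le Ch\3bar\bv\3bar$. The main obstacle in the whole lemma, if there is one, is the bookkeeping in the first two estimates: one must correctly pair the $h_T^{-1}$ trace piece with the $m=0$ approximation bound and the $h_T$ trace piece with the $m=1$ approximation bound so that both contribute at order $h^2\|\bw\|_2^2$ before taking square roots. Once the weights are paired correctly, everything else is routine.
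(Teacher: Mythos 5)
Your proposal is correct and follows essentially the same route as the paper's proof: Cauchy--Schwarz with the appropriate $h_T$ weights, the trace inequality paired with the $m=0,1$ cases of Lemma \ref{lemmaA1}, the bound (\ref{EQ:v0vb}) for the $\bv$-factor in $\ell_\bw$, and (\ref{RTEs}) for $\theta_\bw$. The only cosmetic difference is that the paper removes the outer $Q_b$ in the stabilizer term by moving it onto $Q_b\bv_0-\bv_b$ (which already lies in the projection range) rather than invoking $L^2$-boundedness of $Q_b$; both are fine.
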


\begin{proof}
	It follows from (\ref{EQ:stabilizer}), the
	Cauchy-Schwarz inequality, the trace inequality, and the estimate
	(\ref{A1}) that
	\begin{equation*}
		\begin{split}
			|s(Q_h\bw,\bv)|=&\left|\sum_{T\in {\cal
					T}_h}h_T^{-1}\langle Q_bQ_0\bw-Q_b\bw, Q_b\bv_0-
			\bv_b\rangle_{\partial
				T}\right|\\
			=&\left|\sum_{T\in {\cal T}_h}h_T^{-1}\langle
			Q_0\bw - \bw,Q_b\bv_0-
			\bv_b\rangle_{\partial
				T}\right|\\
			\leq & \left (\sum_{T\in {\cal T}_h}h_T^{-1}\|Q_0\bw-
			\bw\|_{\partial T}^2\right )^{\frac{1}{2}}\left (\sum_{T\in {\cal
					T}_h}h_T^{-1}\|Q_b\bv_0- \bv_b\|_{\partial
				T}^2\right )^{\frac{1}{2}}\\
			\leq & C\left (\sum_{T\in {\cal T}_h}h_T^{-2}\|Q_0\bw- \bw\|_{T}^2 +
			\|\nabla(Q_0\bw- \bw)\|_{T}^2\right )^{\frac{1}{2}} \3bar \bv \3bar\\
			\leq & Ch\|\bw\|_2\3bar \bv \3bar.
		\end{split}
	\end{equation*}
	We then prove (\ref{A5}). Using the Cauchy-Schwarz inequality, the trace inequality, the estimates (\ref{EQ:v0vb}) and (\ref{A2}), we get
	\begin{equation*}
		\begin{split}
			|\ell_\bw(\bv)|=&\left| \mu\sum_{T\in{\cal
					T}_h}\langle( \nabla\bw-\textbf{Q}_h\nabla
			\bw)\bn,\bv_0-\bv_b\rangle_{\partial T}\right|\\
			\leq & C \left (\sum_{T\in{\cal T}_h}h_T \|
			\nabla\bw-\textbf{Q}_h\nabla\bw\|^2_{\partial
				T}\right )^{ \frac{1}{2}} \left (\sum_{T\in{\cal T}_h}h_T^{-1}\|
			\bv_0-\bv_b\|^2_{\partial T}\right )^{ \frac{1}{2}}\\
			\leq & C h\|\bw\|_2\3bar \bv \3bar.
		\end{split}
	\end{equation*}
	Finally, by the Cauchy-Schwarz inequality and the estimate (\ref{RTProp}), we arrive at
	\begin{equation*}
		\begin{split}
			|\theta_\bw(\textbf{v})|&=\left| \mu\sum_{T\in {\cal T}_h}(\Delta\bw,\bv_0-\bRT(\bv))_T \right|\\
			&\leq C\|\bw\|_2 \left (\sum_{T\in{\cal T}_h}\|\bv_0-\bRT(\bv)\|^2_{T}\right )^{ \frac{1}{2}} \\
			&\leq  C h\|\bw\|_2\3bar\textbf{v}\3bar.
		\end{split}
	\end{equation*}
	 
\end{proof}

\begin{theorem}\label{ehH1}
	Let $\bu \in
	[H^2(\Omega)]^d $  be the solution of (\ref{model-weak}) and $\bu_h \in V_h $ be the solution of the WG scheme (\ref{WGA_primal}), we have
	\begin{equation}\label{th1}
		\3barQ_h\bu-\bu_h\3bar \leq Ch\|\bu\|_2,
	\end{equation}
	where $C>0$ is a  constant independent of $\lambda$ or the mesh size $h$.
\end{theorem}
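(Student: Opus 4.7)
The plan is to derive (\ref{th1}) directly from the error equation of Lemma \ref{LemErrorEq}, the coercivity bound of Lemma \ref{lemmacoerv}, and the three projection estimates of Lemma \ref{LemProEstimate}. First I would substitute $\bv=\be_h$ into (\ref{EQ:errorequation}) to get
$$
a_s(\be_h,\be_h)=\ell_\bu(\be_h)-\theta_\bu(\be_h)+s(Q_h\bu,\be_h),
$$
and bound the left-hand side from below by $\alpha\,\3bar\be_h\3bar^2$ via Lemma \ref{lemmacoerv}, where $\alpha=\min\{\mu,1\}$.

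Next I would apply the three estimates (\ref{A4})--(\ref{A6}) with $\bw=\bu$ and $\bv=\be_h$, each of which contributes a term of size $Ch\|\bu\|_2\,\3bar\be_h\3bar$. Adding them and combining with the coercivity bound produces
$$
\alpha\,\3bar\be_h\3bar^{2}\le C h\|\bu\|_2\,\3bar\be_h\3bar,
$$
and dividing by $\3bar\be_h\3bar$ yields the claimed inequality after absorbing $\alpha^{-1}$ into the generic constant. This absorption is legitimate precisely because $\alpha$ depends only on $\mu$, not on $\lambda$.

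The crucial observation, and the reason the resulting bound is locking-free, is that $\lambda$ does not appear in any of $\ell_\bu(\bv)$, $\theta_\bu(\bv)$, or $s(Q_h\bu,\bv)$. This is the payoff of using the divergence-preserving reconstruction $\bRT$: in deriving the error equation, the $(\lambda+\mu)$-weighted contribution $(\nabla_w\!\cdot\!Q_h\bu,\nabla_w\!\cdot\!\bv)$ is exactly cancelled by the $(\lambda+\mu)(\nabla(\nabla\!\cdot\!\bu),\bRT(\bv))$ piece produced by testing (\ref{primal_model}) against $\bRT(\bv)$, as shown in (\ref{EQ:weakdivQh})--(\ref{EQ:f_RTv}).

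There is no substantial obstacle: all the real work has been front-loaded into Lemmas \ref{lemmacoerv}, \ref{LemErrorEq}, and \ref{LemProEstimate}, and what remains is essentially a one-line coercivity-plus-Cauchy--Schwarz argument. The only subtlety is the careful bookkeeping of constants to confirm that neither $\alpha$ nor the hidden constants in Lemma \ref{LemProEstimate} depend on $\lambda$, which then establishes that the final constant in (\ref{th1}) is genuinely $\lambda$-independent.
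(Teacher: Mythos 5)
Your proposal is correct and follows exactly the paper's own argument: set $\bv=\be_h$ in the error equation of Lemma \ref{LemErrorEq}, bound below by the coercivity constant $\alpha=\min\{\mu,1\}$ from Lemma \ref{lemmacoerv}, bound above by the three $\lambda$-independent estimates of Lemma \ref{LemProEstimate}, and divide by $\3bar\be_h\3bar$. Your added remarks on why no $\lambda$-dependent term survives in $\varphi_\bu$ are accurate and consistent with the derivation in (\ref{EQ:weakdivQh})--(\ref{EQ:f_RTv}).
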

\begin{proof}
	By letting $\bv=\be_h$ in error equation
	(\ref{EQ:errorequation}), we arrive at
	$$
	a_s(\textbf{e}_h,\textbf{e}_h)=\varphi_{\textbf{u}}(\textbf{e}_h).
	$$
	Using Lemma \ref{LemProEstimate} and Lemma \ref{lemmacoerv}, we obtain
	\begin{equation*}
		\alpha\3bar \textbf{e}_h\3bar^2 \leq a_s(\textbf{e}_h,\textbf{e}_h) \leq
		Ch\|\bu\|_2\3bar \textbf{e}_h\3bar,
	\end{equation*}
	which implies that
	\begin{eqnarray}\label{eh}
		\3bar\textbf{e}_h\3bar\leq
		Ch\|\bu\|_2.
	\end{eqnarray}
	This completes the proof.
	 
\end{proof}

\begin{remark}
	Based on the findings presented in Theorem \ref{ehH1}, it is evident that the displacement error measured in the $H^1$-norm is independent of the $Lam\acute{e}$ constant $\lambda$.
	This indicates that the new WG algorithm \ref{algo-primal} is locking-free.
\end{remark}

\section{ Error Estimate in $L^2$ norm}\label{section:L2}
To derive an $L^2$ norm error estimate for the new WG Algorithm \ref{algo-primal}, we consider the duality problem: Seeking
$\bPhi$ such that
\begin{eqnarray}\label{dualityEq}
	-\mu\Delta\bPhi-(\lambda+\mu)\nabla(\nabla\cdot\bPhi)&=&\textbf{e}_0,\qquad\text{in}\ \Omega,\\
	\bPhi&=&\textbf{0}, \ \qquad\text{on} \ \Gamma.\label{bdy}
\end{eqnarray}
Assume that the dual problem (\ref{dualityEq})-(\ref{bdy}) satisfies the following 
regularity estimate:
\begin{equation}\label{dualityEqregularity}
	\mu\|\bPhi\|_2+ (\lambda+\mu)\|\nabla\cdot\bPhi\|_1\leq C\|\textbf{e}_0 \|,
\end{equation}
where the positive constant $C$ is independent of $\lambda$. The above regularity assumption is reasonable as stated in Section 11.2 of \cite{b2008}.

\begin{theorem}\label{THL2} Let $\bu \in [H ^2 (\Omega)]^d$ be the solution of (\ref{primal_model}), and $\bu_h \in V_h$ be the WG solution arising from the WG scheme (\ref{WGA_primal}). Assume  $\bbf \in [H^1(\Omega)]^d$. There exists a positive constant $C$ independent of $\lambda$, such that
	\begin{equation}\label{the0}
		\|Q_0\bu-\bu_0\|\leq Ch^2(\|\bu\|_2+\|\bbf\|_1).
	\end{equation}
\end{theorem}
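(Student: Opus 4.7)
My plan is an Aubin--Nitsche duality argument in the spirit of the standard WG $L^2$-analysis, adapted to account for the $H(\mathrm{div})$-conforming reconstruction. Let $\bPhi$ solve the dual problem (\ref{dualityEq})--(\ref{bdy}); since $\bPhi|_\Gamma=\textbf{0}$, the projection $Q_h\bPhi=\{Q_0\bPhi,Q_b\bPhi\}$ lies in $V_h^0$ and is an admissible test function. First I would repeat the derivation of Lemma \ref{LemErrorEq} with $(\bPhi,\be_0)$ in place of $(\bu,\bbf)$ to obtain the dual error identity
\begin{equation*}
a_s(Q_h\bPhi,\bv) = (\be_0,\bRT(\bv)) + \ell_\bPhi(\bv) - \theta_\bPhi(\bv) + s(Q_h\bPhi,\bv), \quad \forall\,\bv\in V_h^0.
\end{equation*}
Taking $\bv=\be_h$ here and $\bv=Q_h\bPhi$ in the primal error equation (\ref{EQ:errorequation}), then using the symmetry of $a_s$, I would subtract to obtain
\begin{equation*}
(\be_0,\bRT(\be_h)) = \varphi_\bu(Q_h\bPhi) - \ell_\bPhi(\be_h) + \theta_\bPhi(\be_h) - s(Q_h\bPhi,\be_h).
\end{equation*}
Since $\|\be_0\|^2=(\be_0,\bRT(\be_h))-(\be_0,\bRT(\be_h)-\be_0)$, the theorem reduces to bounding each of the resulting seven terms by $Ch^2(\|\bu\|_2+\|\bbf\|_1)\|\be_0\|$ with $C$ independent of $\lambda$.

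Several of the terms are handled routinely. For $\ell_\bPhi(\be_h)$, $\theta_\bPhi(\be_h)$, and $s(Q_h\bPhi,\be_h)$, I would apply Lemma \ref{LemProEstimate} with $\bw=\bPhi$, use the $H^1$-bound $\3bar\be_h\3bar\le Ch\|\bu\|_2$ from Theorem \ref{ehH1}, and absorb the resulting $\mu\|\bPhi\|_2$ via the dual regularity $\mu\|\bPhi\|_2\le C\|\be_0\|$; this gives $Ch^2\|\bu\|_2\|\be_0\|$ for each. The correction $(\be_0,\bRT(\be_h)-\be_0)$ is controlled directly by Cauchy--Schwarz and (\ref{RTEs}). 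The cross stabilizer $s(Q_h\bu,Q_h\bPhi)$ splits into a product of two projection-error factors, each carrying $Ch\|\cdot\|_2$ by the calculation underlying (\ref{A4}), giving $Ch^2\|\bu\|_2\|\bPhi\|_2$. For $\theta_\bu(Q_h\bPhi)$, the sharper middle inequality in (\ref{RTEs}) is essential: plugging $\bv=Q_h\bPhi$ gives $Q_b\bv_0-\bv_b=Q_b(Q_0\bPhi-\bPhi)$, whose squared $\partial T$-norms sum at order $h^3\|\bPhi\|_2^2$, so $\bigl(\sum_T\|\bRT(Q_h\bPhi)-Q_0\bPhi\|_T^2\bigr)^{1/2}\le Ch^2\|\bPhi\|_2$, yielding $|\theta_\bu(Q_h\bPhi)|\le\mu\|\Delta\bu\|\cdot Ch^2\|\bPhi\|_2\le Ch^2\|\bu\|_2\|\be_0\|$.

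The delicate step is $\ell_\bu(Q_h\bPhi)$, where naive Cauchy--Schwarz only yields $O(h)$. I would split $Q_0\bPhi-Q_b\bPhi=(Q_0\bPhi-\bPhi)+(\bPhi-Q_b\bPhi)$ and show that the second piece contributes exactly zero. Continuity of $\nabla\bu$ across interior edges, combined with $\bPhi-Q_b\bPhi=\textbf{0}$ on $\Gamma$, gives $\sum_T\langle(\nabla\bu)\bn,\bPhi-Q_b\bPhi\rangle_{\partial T}=0$; and the fact that $(\textbf{Q}_h\nabla\bu)\bn|_e$ is edge-wise constant while $\bPhi-Q_b\bPhi$ has zero mean on each edge by definition of $Q_b$ gives $\sum_T\langle(\textbf{Q}_h\nabla\bu)\bn,\bPhi-Q_b\bPhi\rangle_{\partial T}=0$. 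What survives, involving only $Q_0\bPhi-\bPhi$, is then bounded via Cauchy--Schwarz and (\ref{A1})--(\ref{A2}) to give $|\ell_\bu(Q_h\bPhi)|\le Ch^2\|\bu\|_2\|\bPhi\|_2\le Ch^2\|\bu\|_2\|\be_0\|$. The $\|\bbf\|_1$ ingredient enters if, for full transparency of $\lambda$-dependence in $\theta_\bu(Q_h\bPhi)$, one substitutes $\mu\Delta\bu=-\bbf-(\lambda+\mu)\nabla(\nabla\cdot\bu)$: the $\bbf$-piece pairs with $Q_0\bPhi-\bRT(Q_h\bPhi)$ to give $Ch^2\|\bbf\|\|\bPhi\|_2\le Ch^2\|\bbf\|_1\|\be_0\|$, and the divergence-piece is integrated by parts and cleared of $\lambda$ via the commuting identity $\nabla\cdot\bRT(Q_h\bPhi)={\cal Q}_h\nabla\cdot\bPhi$, continuity of $\nabla\cdot\bu$, and the second half $(\lambda+\mu)\|\nabla\cdot\bPhi\|_1\le C\|\be_0\|$ of the dual regularity. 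I anticipate this last bookkeeping step --- arranging the integrations by parts so that every $\lambda$-weighted boundary residue either cancels via continuity of $\nabla\cdot\bu$ and the $H(\mathrm{div})$-conformity of $\bRT$, or is absorbed by the $(\lambda+\mu)$-weighted dual regularity, without losing a power of $h$ --- to be the main obstacle.
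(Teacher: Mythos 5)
Your argument is correct, but it is organized quite differently from the paper's. The paper never forms a dual error equation or invokes the symmetry of $a_s$: it pairs the dual PDE directly with $\be_0$, rewrites $I_1=-\mu\sum_T(\Delta\bPhi,\be_0)_T$ and $I_2=-(\lambda+\mu)\sum_T(\nabla(\nabla\cdot\bPhi),\be_0)_T$ via the weak-derivative identities so that the continuous products $\mu(\nabla\bu,\nabla\bPhi)+(\lambda+\mu)(\nabla\cdot\bu,\nabla\cdot\bPhi)$ cancel against $(\bbf,\bPhi)$ through the weak form, and the discrete products cancel against $(\bbf,{\bf R}_h(Q_h\bPhi))$ through the scheme; what survives are products of two first-order projection errors, boundary residues paired with $\be_0-\be_b$, the stabilizers, and the data term $(\bbf,\bPhi-{\bf R}_h(Q_h\bPhi))$ --- the last of which is exactly why the paper assumes $\bbf\in[H^1(\Omega)]^d$ and why the $(\lambda+\mu)\|\nabla\cdot\bPhi\|_1$ half of the regularity (\ref{dualityEqregularity}) is invoked. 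Your symmetrized route trades those terms for the consistency functionals $\ell_\bu(Q_h\bPhi)$ and $\theta_\bu(Q_h\bPhi)$, and your treatment of both is sound: the edge-mean-zero/normal-continuity cancellation for $\ell_\bu(Q_h\bPhi)$ and the middle inequality of (\ref{RTEs}) combined with $\|Q_b(Q_0\bPhi-\bPhi)\|_{\partial T}^2\lesssim h_T^3\|\bPhi\|_{2,T}^2$ for $\theta_\bu(Q_h\bPhi)$ both deliver genuine $O(h^2)$ bounds. Note, moreover, that the final ``bookkeeping'' step you flag as the main obstacle is unnecessary: since $\theta_\bu(Q_h\bPhi)$ carries only the factor $\mu\Delta\bu$ and no $\lambda$-weighted term survives anywhere in your identity, the direct estimate $|\theta_\bu(Q_h\bPhi)|\le C h^2\|\bu\|_2\|\bPhi\|_2$ already suffices, so you never need to substitute $\mu\Delta\bu=-\bbf-(\lambda+\mu)\nabla(\nabla\cdot\bu)$. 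Your version therefore yields $\|\be_0\|\le Ch^2\|\bu\|_2$ using only the $\mu\|\bPhi\|_2\le C\|\be_0\|$ part of (\ref{dualityEqregularity}) (plus the qualitative fact $\nabla\cdot\bPhi\in H^1$ needed to derive the dual identity), without assuming $\bbf\in[H^1(\Omega)]^d$ --- a marginally sharper conclusion than (\ref{the0}), at the cost of the two superconvergence-type estimates the paper's arrangement avoids.
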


\begin{proof}
	The proof is divided into five steps.
	
	Step 1: 
	Testing (\ref{dualityEq}) with $\textbf{e}_0$, we get
	\begin{equation}\label{e0st1}
		\|\be_0\|^2=-\mu\sum_{T\in {\cal T}_h}(\Delta\bPhi,\be_0)_T-(\lambda+\mu)\sum_{T\in {\cal T}_h}(\nabla(\nabla\cdot\bPhi),\be_0)_T=I_1+I_2.
	\end{equation}
	
	Step 2: 
	By applying (\ref{EQ:laplaceu_v0}), (\ref{EQ:projection_gradient}), the definitions of $\textbf{e}_h$ and $\textbf{Q}_h$, we have
	\begin{equation}\label{e0st2}
		\begin{split}
			&I_1=-\mu\sum_{T\in {\cal T}_h}(\Delta\bPhi,\be_0)_T\\
			=&\mu\sum_{T\in{\cal T}_h}(\nabla_w(Q_h\bPhi), \nabla_w\textbf{e}_h)_T-\mu\sum_{T\in{\cal T}_h}\langle \be_0 - \be_b,(\nabla\bPhi-\textbf{Q}_h\nabla\bPhi)\bn\rangle_{\partial T}\\
			=&\mu\sum_{T\in{\cal T}_h}(\textbf{Q}_h\nabla\bPhi, \nabla_w(Q_h\bu)-\nabla_w\bu_h)_T-\mu\sum_{T\in{\cal T}_h}\langle \be_0 - \be_b,(\nabla\bPhi-\textbf{Q}_h\nabla\bPhi)\bn\rangle_{\partial T}\\
			=&\mu\sum_{T\in{\cal T}_h}(\textbf{Q}_h\nabla\bPhi, \textbf{Q}_h\nabla\bu-\nabla_w\bu_h)_T-\mu\sum_{T\in{\cal T}_h}\langle \be_0 - \be_b,(\nabla\bPhi-\textbf{Q}_h\nabla\bPhi)\bn\rangle_{\partial T}\\
			=&\mu\sum_{T\in{\cal T}_h}(\textbf{Q}_h\nabla\bPhi, \nabla\bu-\nabla_w\bu_h)_T-\mu\sum_{T\in{\cal T}_h}\langle \be_0 - \be_b,(\nabla\bPhi-\textbf{Q}_h\nabla\bPhi)\bn\rangle_{\partial T}\\
			=&\mu\sum_{T\in{\cal T}_h}(\textbf{Q}_h\nabla\bPhi-\nabla\bPhi, \nabla\bu-\nabla_w\bu_h)_T+\mu\sum_{T\in{\cal T}_h}(\nabla\bPhi, \nabla\bu-\nabla_w\bu_h)_T\\
			&-\mu\sum_{T\in{\cal T}_h}\langle \be_0 - \be_b,(\nabla\bPhi-\textbf{Q}_h\nabla\bPhi)\bn\rangle_{\partial T}\\
			=&A_1+A_2-\ell_{\bPhi}(\textbf{e}_h).
		\end{split}
	\end{equation}
	
	Using (\ref{EQ:projection_gradient}) and the fact that $\sum_{T\in{\cal T}_h}(\nabla\bPhi-\textbf{Q}_h\nabla\bPhi, \textbf{Q}_h\nabla\bu)_T=0$, the second item $A_2$ can be rewritten as
	\begin{align}\label{e0st21}
		\begin{split}
			&A_2=\mu\sum_{T\in{\cal T}_h}(\nabla\bPhi, \nabla\bu-\nabla_w\bu_h)_T \\
			=&\mu\sum_{T\in{\cal T}_h}(\nabla\bPhi, \nabla\bu)_T-\mu\sum_{T\in{\cal T}_h}(\nabla_w(Q_h\bPhi), \nabla_w\bu_h)_T 
			-\mu\sum_{T\in{\cal T}_h}(\nabla\bPhi-\textbf{Q}_h\nabla\bPhi, \nabla_w\bu_h)_T \\ 
			=&\mu\sum_{T\in{\cal T}_h}(\nabla\bPhi, \nabla\bu)_T-\mu\sum_{T\in{\cal T}_h}(\nabla_w(Q_h\bPhi), \nabla_w\bu_h)_T \\
			&-\mu\sum_{T\in{\cal T}_h}(\nabla\bPhi-\textbf{Q}_h\nabla\bPhi, \nabla_w\bu_h)_T
			+\mu\sum_{T\in{\cal T}_h}(\nabla\bPhi-\textbf{Q}_h\nabla\bPhi, \textbf{Q}_h\nabla\bu)_T \\
			=&\mu\sum_{T\in{\cal T}_h}(\nabla\bPhi, \nabla\bu)_T-\mu\sum_{T\in{\cal T}_h}(\nabla\bPhi-\textbf{Q}_h\nabla\bPhi, \nabla_w\bu_h-\nabla\bu)_T \\
			&-\mu\sum_{T\in{\cal T}_h}(\nabla_w(Q_h\bPhi), \nabla_w\bu_h)_T
			-\mu\sum_{T\in{\cal T}_h}(\nabla\bPhi-\textbf{Q}_h\nabla\bPhi, \nabla\bu-\textbf{Q}_h\nabla\bu)_T.
		\end{split}
	\end{align}
	
	By substituting (\ref{e0st21}) into (\ref{e0st2}), we arrive at
	\begin{equation}\label{e0st22}
		\begin{split}
			I_1=&-\mu\sum_{T\in {\cal T}_h}(\Delta\bPhi,\be_0)_T\\
			=&\mu\sum_{T\in{\cal T}_h}(\nabla\bPhi, \nabla\bu)_T-\mu\sum_{T\in{\cal T}_h}(\nabla\bPhi-\textbf{Q}_h\nabla\bPhi, \nabla\bu-\textbf{Q}_h\nabla\bu)_T\\
			&-\mu\sum_{T\in{\cal T}_h}(\nabla_w(Q_h\bPhi), \nabla_w\bu_h)_T-\mu\sum_{T\in{\cal T}_h}\langle \be_0 - \be_b,(\nabla\bPhi-\textbf{Q}_h\nabla\bPhi)\cdot\bn\rangle_{\partial T}.
		\end{split}
	\end{equation}
	
	Step 3: 
	By using integration by parts, the definition of ${\cal Q}_h$, (\ref{weak-divergence}), and the fact  that $\sum_{T\in{\cal T}_h}\langle\be_{b},(\nabla\cdot\bPhi)\bn\rangle_\pT = 0$, we have
	\begin{align}\label{e0st31}
		\begin{split}
		&-\sum_{T\in {\cal T}_h}(\nabla(\nabla\cdot\bPhi),\be_0)_T \\
		=&\sum_{T\in {\cal T}_h}(\nabla\cdot\bPhi,\nabla\cdot\be_0)_T-\sum_{T\in {\cal T}_h}\l(\nabla\cdot\bPhi)\bn,\be_0\r_{\partial T} \\
		=&\sum_{T\in {\cal T}_h}({\cal Q}_h\nabla\cdot\bPhi,\nabla\cdot\be_0)_T-\sum_{T\in {\cal T}_h}\l(\nabla\cdot\bPhi)\bn,\be_0\r_{\partial T} \\
		=&-\sum_{T\in {\cal T}_h}\l(\nabla\cdot\bPhi)\bn,\be_0\r_{\partial T}+\sum_{T\in {\cal T}_h}\l({\cal Q}_h\nabla\cdot\bPhi)\bn,\be_0\r_{\partial T} 
		-\sum_{T\in {\cal T}_h}(\nabla({\cal Q}_h\nabla\cdot\bPhi),\be_0)_T \\
		=&-\sum_{T\in {\cal T}_h}\l(\nabla\cdot\bPhi)\bn,\be_0\r_{\partial T}+\sum_{T\in {\cal T}_h}\l({\cal Q}_h\nabla\cdot\bPhi)\bn,\be_0-\be_b\r_{\partial T} 
		+\sum_{T\in {\cal T}_h}({\cal Q}_h\nabla\cdot\bPhi,\nabla_w\cdot \be_h)_T \\
		=&-\sum_{T\in {\cal T}_h}\l(\nabla\cdot\bPhi)\bn,\be_0-\be_b\r_{\partial T}+\sum_{T\in {\cal T}_h}\l({\cal Q}_h\nabla\cdot\bPhi)\bn,\be_0-\be_b\r_{\partial T}
		+\sum_{T\in {\cal T}_h}({\cal Q}_h\nabla\cdot\bPhi,\nabla_w\cdot \be_h)_T \\
		=&-\sum_{T\in {\cal T}_h}\l(\nabla\cdot\bPhi-{\cal Q}_h\nabla\cdot\bPhi)\bn,\be_0-\be_b\r_{\partial T}+\sum_{T\in {\cal T}_h}({\cal Q}_h\nabla\cdot\bPhi,\nabla_w\cdot \be_h)_T.
		\end{split}
	\end{align}
	
	For the second term $\sum_{T\in {\cal T}_h}({\cal Q}_h\nabla\cdot\bPhi,\nabla_w\cdot \be_h)_T$, it follows from the definition of $\textbf{e}_h$, (\ref{EQ:projection_div}), and the definition of ${\cal Q}_h$ that
	\begin{align}\label{e0st32}
		\begin{split}
			&\sum_{T\in {\cal T}_h}({\cal Q}_h\nabla\cdot\bPhi,\nabla_w\cdot \be_h)_T \\
			=&\sum_{T\in {\cal T}_h}({\cal Q}_h\nabla\cdot\bPhi,\nabla_w\cdot (Q_h\bu-\bu_h))_T \\
			=&\sum_{T\in {\cal T}_h}({\cal Q}_h\nabla\cdot\bPhi,\nabla\cdot\bu-\nabla_w\cdot\bu_h)_T \\
			=&\sum_{T\in {\cal T}_h}({\cal Q}_h\nabla\cdot\bPhi-\nabla\cdot\bPhi,\nabla\cdot\bu-\nabla_w\cdot\bu_h)_T+\sum_{T\in {\cal T}_h}(\nabla\cdot\bPhi,\nabla\cdot\bu-\nabla_w\cdot\bu_h)_T.		
		\end{split}
	\end{align}
	
	Reorganizing  the second term in (\ref{e0st32}),  we have
	\begin{equation}\label{e0st33}
		\begin{aligned}
			&\sum_{T\in {\cal T}_h}(\nabla\cdot\bPhi,\nabla\cdot\bu-\nabla_w\cdot\bu_h)_T\\
			=&\sum_{T\in {\cal T}_h}(\nabla\cdot\bPhi,\nabla\cdot\bu)_T-\sum_{T\in {\cal T}_h}(\nabla_w\cdot(Q_h\bPhi),\nabla_w\cdot\bu_h)_T\\
			&-\sum_{T\in {\cal T}_h}(\nabla\cdot\bPhi-{\cal Q}_h\nabla \cdot\bPhi,\nabla_w\cdot\bu_h)_T\\
			=&\sum_{T\in {\cal T}_h}(\nabla\cdot\bPhi,\nabla\cdot\bu)_T-\sum_{T\in {\cal T}_h}(\nabla\cdot\bPhi-{\cal Q}_h\nabla \cdot\bPhi,\nabla_w\cdot\bu_h-\nabla\cdot\bu)_T\\
			&-\sum_{T\in {\cal T}_h}(\nabla_w\cdot(Q_h\bPhi),\nabla_w\cdot\bu_h)_T-\sum_{T\in {\cal T}_h}(\nabla\cdot\bPhi-{\cal Q}_h\nabla \cdot\bPhi,\nabla\cdot\bu-{\cal Q}_h\nabla \cdot\bu)_T.
		\end{aligned}
	\end{equation}
	
	According to (\ref{e0st31}), (\ref{e0st32}), and (\ref{e0st33}), we obtain
	\begin{align}
		I_2=&-(\lambda+\mu)\sum_{T\in {\cal T}_h}(\nabla(\nabla\cdot\bPhi),\be_0)_T \nonumber\\
		=&-(\lambda+\mu)\sum_{T\in {\cal T}_h}\l(\nabla\cdot\bPhi-{\cal Q}_h\nabla\cdot\bPhi)\bn,\be_0-\be_b\r_{\partial T} \nonumber\\
		&+(\lambda+\mu)\sum_{T\in {\cal T}_h}({\cal Q}_h\nabla\cdot\bPhi,\nabla_w\cdot \be_h)_T \nonumber\\
		=&-(\lambda+\mu)\sum_{T\in {\cal T}_h}\l(\nabla\cdot\bPhi-{\cal Q}_h\nabla\cdot\bPhi)\bn,\be_0-\be_b\r_{\partial T} \label{e0st34} \\
		&+(\lambda+\mu)\sum_{T\in {\cal T}_h}(\nabla\cdot\bPhi,\nabla\cdot\bu)_T \nonumber\\
		&-(\lambda+\mu)\sum_{T\in {\cal T}_h}(\nabla_w\cdot(Q_h\bPhi),\nabla_w\cdot\bu_h)_T \nonumber\\
		&-(\lambda+\mu)\sum_{T\in {\cal T}_h}(\nabla\cdot\bPhi-{\cal Q}_h\nabla \cdot\bPhi,\nabla\cdot\bu-{\cal Q}_h\nabla \cdot\bu)_T \nonumber.
	\end{align}
	Step 4: Testing (\ref{primal_model}) by using $\bPhi$, together with the fact that $\bPhi=\textbf{0}$ on $\Gamma$, we obtain
	\begin{equation}\label{e0st41}
		\mu(\nabla\bu,\nabla\bPhi)+(\lambda+\mu)(\nabla\cdot\bu,\nabla\cdot\bPhi)=(\bbf,\bPhi).
	\end{equation}
	From the WG scheme (\ref{WGA_primal}), we have
	\begin{equation*}
		\begin{aligned}
			\mu\sum_{T\in {\cal T}_h}(\nabla_w\bu_h,\nabla_w(Q_h\bPhi))_T+(\lambda+\mu)\sum_{T\in {\cal T}_h}(\nabla_w\cdot\bu_h,\nabla_w\cdot(Q_h\bPhi))_T
			+s(\bu_h,Q_h\bPhi)=(\bbf, {\textbf{R}}_h(Q_h\bPhi)),
		\end{aligned}
	\end{equation*}
	which leads to
	\begin{equation}\label{e0st42}
		\begin{aligned}
			&\mu\sum_{T\in {\cal T}_h}(\nabla_w\bu_h,\nabla_w(Q_h\bPhi))_T+(\lambda+\mu)\sum_{T\in {\cal T}_h}(\nabla_w\cdot\bu_h,\nabla_w\cdot(Q_h\bPhi))_T\\
			=&(\bbf, {\textbf{R}}_h(Q_h\bPhi))-s(\bu_h,Q_h\bPhi)\\
			=&(\bbf, {\textbf{R}_h}(Q_h\bPhi))-s(Q_h\bu,Q_h\bPhi)+s(\be_h,Q_h\bPhi).
		\end{aligned}
	\end{equation}
	
	Step 5: Combining (\ref{e0st1}), (\ref{e0st22}), (\ref{e0st34}), (\ref{e0st41}), and (\ref{e0st42}), we arrive at
	\begin{equation}\label{e0st51}
		\begin{aligned}
			\|\be_0\|^2=&-\mu\sum_{T\in {\cal T}_h}(\Delta\bPhi,\be_0)_T-(\lambda+\mu)\sum_{T\in {\cal T}_h}(\nabla(\nabla\cdot\bPhi),\be_0)_T\\
			=&-\mu\sum_{T\in{\cal T}_h}\langle \be_0 - \be_b,(\nabla\bPhi-\textbf{Q}_h\nabla\bPhi)\bn\rangle_{\partial T}\\
			&-(\lambda+\mu)\sum_{T\in {\cal T}_h}\l(\nabla\cdot\bPhi-{\cal Q}_h\nabla\cdot\bPhi)\bn,\be_0-\be_b\r_{\partial T}\\
			&-\mu\sum_{T\in{\cal T}_h}(\nabla\bPhi-\textbf{Q}_h\nabla\bPhi, \nabla\bu-\textbf{Q}_h\nabla\bu)_T\\
			&-(\lambda+\mu)\sum_{T\in {\cal T}_h}(\nabla\cdot\bPhi-{\cal Q}_h\nabla \cdot\bPhi,\nabla\cdot\bu-{\cal Q}_h\nabla \cdot\bu)_T\\
			&+(\bbf,\bPhi)-(\bbf, {\textbf{R}_h}(Q_h\bPhi))+s(Q_h\bu,Q_h\bPhi)-s(\be_h,Q_h\bPhi).
		\end{aligned}	
	\end{equation}

	Now,  we estimate the  eight terms on the right-hand side of (\ref{e0st51}).
	
	(i) According to Lemma \ref{LemProEstimate} and Theorem \ref{ehH1}, it follows that
	\begin{equation}\label{e0st52}
		\left|\mu\sum_{T\in{\cal T}_h}\langle \be_0 - \be_b,(\nabla\bPhi-\textbf{Q}_h\nabla\bPhi) \bn\rangle_{\partial T}\right|\leq Ch\|\bPhi\|_2\3bar\be_h\3bar\leq Ch^2\|\bPhi\|_2\|\bu\|_2,
	\end{equation}
	\begin{equation}\label{e0st53}
		|s(\be_h,Q_h\bPhi)|\leq Ch\|\bPhi\|_2\3bar\be_h\3bar\leq Ch^2\|\bPhi\|_2\|\bu\|_2.
	\end{equation}
	
	(ii) It  follows from the Cauchy-Schwarz inequality, the trace inequality, the estimates (\ref{EQ:v0vb}) and (\ref{A3}), and Theorem \ref{ehH1} that
	\begin{equation}\label{e0st54}
		\begin{split}
			&\left|(\lambda+\mu)\sum_{T\in{\cal T}_h} \langle
			\nabla\cdot\bPhi- {\cal Q}_h(\nabla\cdot\bPhi) ,
			({\textbf{e}_0-\textbf{e}_b})\cdot\bn \rangle_{\partial T}  \right|\\
			&\leq C(\lambda+\mu) \left (\sum_{T\in{\cal T}_h}h_T \| \nabla\cdot\bPhi- {\cal Q}_h(\nabla\cdot\bPhi) \|^2_{\partial
				T}\right )^{ \frac{1}{2}} \left (\sum_{T\in{\cal T}_h}h_T^{-1}\|
			(\textbf{e}_0-\textbf{e}_b)\cdot\bn\|^2_{\partial T}\right )^{ \frac{1}{2}}
			\\
			&\leq  C (\lambda+\mu) h\| \nabla\cdot\bPhi\|_1\3bar\textbf{e}_h\3bar\\
			&\leq  C (\lambda+\mu) h^2\| \nabla\cdot\bPhi\|_1\|\bu\|_2.
		\end{split}
	\end{equation}
	
	In a same way, from the Cauchy-Schwarz inequality, the trace inequality, and the estimate (\ref{A1}), we arrive at
	\begin{equation}\label{e0st55}
		\begin{split}
			|s(Q_h\bu, Q_h\bPhi)|=&\left|\sum_{T\in {\cal T}_h}h_T^{-1}\langle
			Q_b(Q_0\bu)-Q_b\bu,Q_b(Q_0\bPhi)-Q_b\bPhi\rangle_{\partial
				T}\right|\\
			\leq & \left (\sum_{T\in {\cal T}_h}
			h_T^{-1}\|Q_0\bu-\bu\|^2_{\partial
				T}\right )^{\frac{1}{2}}\left (\sum_{T\in {\cal T}_h}
			h_T^{-1}\|Q_0\bPhi-\bPhi\|^2_{\partial T}\right )^{\frac{1}{2}}\\
			\leq & Ch^{2}\|\bu\|_{2}\|\bPhi\|_{2}.
		\end{split}
	\end{equation}

	(iii) From the Cauchy-Schwarz inequality and the estimate (\ref{A2}), we get
	\begin{equation}\label{e0st56}
		\begin{split}
			&\left| \mu\sum_{T\in{\cal T}_h}(\nabla\bPhi-\textbf{Q}_h\nabla\bPhi, \nabla\bu-\textbf{Q}_h\nabla\bu)_T \right|\\
			&\leq C \left (\sum_{T\in{\cal T}_h} \| \nabla\bPhi-\textbf{Q}_h\nabla\bPhi \|^2_{
				T}\right )^{ \frac{1}{2}} \left (\sum_{T\in{\cal T}_h} \| \nabla\bu-\textbf{Q}_h\nabla\bu \|^2_{
				T}\right )^{ \frac{1}{2}} \\
			&\leq Ch^2\|\bPhi\|_2\|\bu\|_2.
		\end{split}
	\end{equation}

	Similarly, using the Cauchy-Schwarz inequality and the estimate (\ref{A3}), we obtain
	\begin{equation}\label{e0st57}
		\begin{split}
			&\left| (\lambda+\mu)\sum_{T\in {\cal T}_h}(\nabla\cdot\bPhi-{\cal Q}_h\nabla \cdot\bPhi,\nabla\cdot\bu-{\cal Q}_h\nabla \cdot\bu)_T \right|\\
			&\leq C (\lambda+\mu)\left (\sum_{T\in{\cal T}_h} \| \nabla\cdot\bPhi-{\cal Q}_h\nabla \cdot\bPhi \|^2_{
				T}\right )^{ \frac{1}{2}} \left (\sum_{T\in{\cal T}_h} \| \nabla\cdot\bu-{\cal Q}_h\nabla \cdot\bu \|^2_{
				T}\right )^{ \frac{1}{2}} \\
			&\leq C(\lambda+\mu)h^2\|\nabla\cdot\bPhi\|_1\|\nabla\cdot\bu\|_1,\\
			&\leq C(\lambda+\mu)h^2\|\nabla\cdot\bPhi\|_1\|\bu\|_2.
		\end{split}
	\end{equation}
	
	(iv) We then estimate $(\bbf,\bPhi)-(\bbf, {\textbf R}_h(Q_h\bPhi))$. First, let $ Q_T^0$ be the  $L^2$-orthogonal projection operator onto the space $[P_0(T)]^d$. By the definition of $\bRT$, we have
	\begin{equation*}
		\sum_{T\in{\cal T}_h}(Q_T^0\bbf,\bPhi-\bRT(Q_h\bPhi))_T=0.
	\end{equation*}
	
	Thus, from the Cauchy-Schwarz inequality, the estimates (\ref{A1}) and (\ref{RTEs}), we obtain
	\begin{equation}\label{e0st59}
		\begin{split}
			&\left|(\bbf,\bPhi)-(\bbf, {\mathbf{R}}_h(Q_h\bPhi))\right|\\
			&=\left|\sum_{T\in{\cal T}_h}(\bbf-Q_T^0\bbf,\bPhi-\bRT(Q_h\bPhi))\right|\\
			&= \left (\sum_{T\in{\cal T}_h}\|\bbf-Q_T^0\bbf\|^2_T\right )^{ \frac{1}{2}} \left (\sum_{T\in{\cal T}_h}\|\bPhi-\bRT(Q_h\bPhi)\|^2_T\right )^{ \frac{1}{2}}\\
			&\leq Ch\|\bbf\|_1h\|\bPhi\|_1,\\
			&\leq Ch^2\|\bbf\|_1\|\bPhi\|_2.
		\end{split}
	\end{equation}
	
	Combining the above and the regularity assumption (\ref{dualityEqregularity}), we arrive at
	\begin{equation*}
		\begin{split}
			\|\be_0\|^2&\leq Ch^2\|\bPhi\|_2\|\bu\|_2+C(\lambda+\mu)h^2\|\nabla\cdot \bPhi\|_1\|\bu\|_2+Ch^2\|\bbf\|_1\|\bPhi\|_2\\
			&\leq Ch^2(\|\bPhi\|_2+(\lambda+\mu)\|\nabla\cdot \bPhi\|_1)\|\bu\|_2+Ch^2\|\bbf\|_1\|\bPhi\|_2\\
			&\leq Ch^2(\|\bu\|_2+\|\bbf\|_1)\|\be_0\|,
		\end{split}
	\end{equation*}
	which completes the proof of Theorem.
	 
\end{proof}

\begin{remark}
	Based on the proofs of Theorem \ref{ehH1} and Theorem \ref{THL2}, it is evident that the boundedness of $\lambda\|\nabla\cdot\bu\|_1$ is not utilized in the proofs.
	Therefore, it can be concluded that the new WG Algorithm \ref{algo-primal} remains locking-free  even when $\lambda\|\nabla\cdot\bu\|_1$ is unbounded.
\end{remark}

\section{Numerical Results}\label{section:Numex}
In this section, numerical examples will be given to demonstrate the convergence and locking-free properties of the new WG Algorithm \ref{algo-primal}.
Consider the linear elasticity problems (\ref{primal_model})-(\ref{bc2}) in the two dimensional
square domain $\Omega =(0,1)^2$  and the three dimensional
unit cube $\Omega =(0,1)^3$.

Let $\bu \in [H^2(\Omega)]^d $ and $\bu_h=\{\bu_0,\bu_b\} \in V_h $ be the solution of linear elasticity problems (\ref{primal_model})-(\ref{bc2}) and the WG scheme (\ref{WGA_primal}), respectively. The error function $\textbf{e}_h$ is defined by
$$\be_h=\{\be_0,\be_b\}=\{Q_0\bu-\bu_0,
Q_b\bu-\bu_b\}.$$
Similar to the traditional finite element method, we consider two error norms as defined below:
\begin{eqnarray*}
	\3bar\be_h\3bar^2&=&\sum_{T\in\mathcal{T}_h}\left\{\int_{T}|\nabla_w\be_h|^2dT+h_T^{-1}\int_{\partial T}|Q_b\be_0-\be_b|^2ds\right\},
	\\
	\|\be_0\|^2&=&\sum_{T\in\mathcal{T}_h}\int_T|\be_0|^2dT.
\end{eqnarray*}

\subsection{Two dimensional convergence test}
Consider the square
domain $\Omega =(0,1)^2$ partitioned into uniform triangular grids $\T_h$ with meshsize $h$.
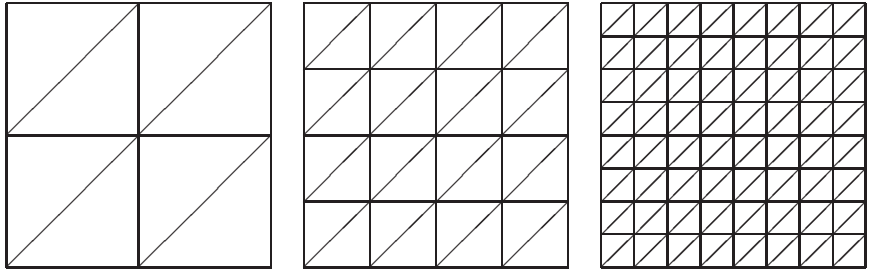
\begin{figure}[h!]
	\begin{center} \setlength\unitlength{1.25 pt}
		\begin{picture}(260,80)(0,0)
			\def\tr{\begin{picture}(20,20)(0,0)\put(0,0){\line(1,0){20}}\put(0,20){\line(1,0){20}}
					\put(0,0){\line(0,1){20}} \put(20,0){\line(0,1){20}} \put(0,0){\line(1,1){20}}
			\end{picture}}
			{\setlength\unitlength{2.5 pt}
				\multiput(0,0)(20,0){2}{\multiput(0,0)(0,20){2}{\tr}}}
			
			{\setlength\unitlength{1.25 pt}
				\multiput(90,0)(20,0){4}{\multiput(0,0)(0,20){4}{\tr}}}
			{\setlength\unitlength{0.625 pt}
				\multiput(360,0)(20,0){8}{\multiput(0,0)(0,20){8}{\tr}}}
	\end{picture}\end{center}
	\caption{The uniform triangular grids in two dimensional test problems with $h=\frac{1}{2},\frac{1}{4},\frac{1}{8}$.}
	\label{2Dlevel}
\end{figure}

\begin{example}\label{example2D1}
	Consider the  elasticity problems (\ref{primal_model})-(\ref{bc2}) with the exact solution
	\begin{eqnarray*}
		\bu=\left(\begin{array}{ccc}
			\sin(\pi x) \sin(\pi y) \\
			\sin(\pi x) \sin(\pi y)
		\end{array}\right).
	\end{eqnarray*}
	Set the $Lam\acute{e}$ constants $\mu=1$,~$\lambda=1$.
\end{example}

The numerical results of the Example \ref{example2D1} are shown in Table \ref{tab:notations1}. As can be seen, numerical results have achieved the optimal order of convergence, which is consistent with the theoretical results.

\begin{table}[!ht]
	\centering
	\caption{ Error and convergence order of displacement $\bu$ in Example \ref{example2D1}.}
	\label{tab:notations1}
	\begin{tabular}{ccccc}
		\hline
		1/h&$\3bar\bu_h-Q_h\bu\3bar$&order&$\|\bu_0-Q_0\bu\|$&order\\
		\hline
		8&6.7697e-01&--&7.1002e-02&--\\
		16&3.4495e-01&0.9727&1.8097e-02&1.9721\\
		32&1.7331e-01&0.9931&4.5467e-03&1.9929\\
		64&8.6759e-02&0.9983&1.1381e-03&1.9982\\
		128&4.3393e-02&0.9996&2.8461e-04&1.9996\\
		\hline
	\end{tabular}
\end{table}

\subsection{Three dimensional convergence test}
Consider the unit cube domain $\Omega =(0,1)^3$. We use tetrahedral meshes as shown in Figure \ref{level}.
\begin{figure}[h!]
	\centering
	\includegraphics[width=1 \columnwidth,height=0.4\linewidth]{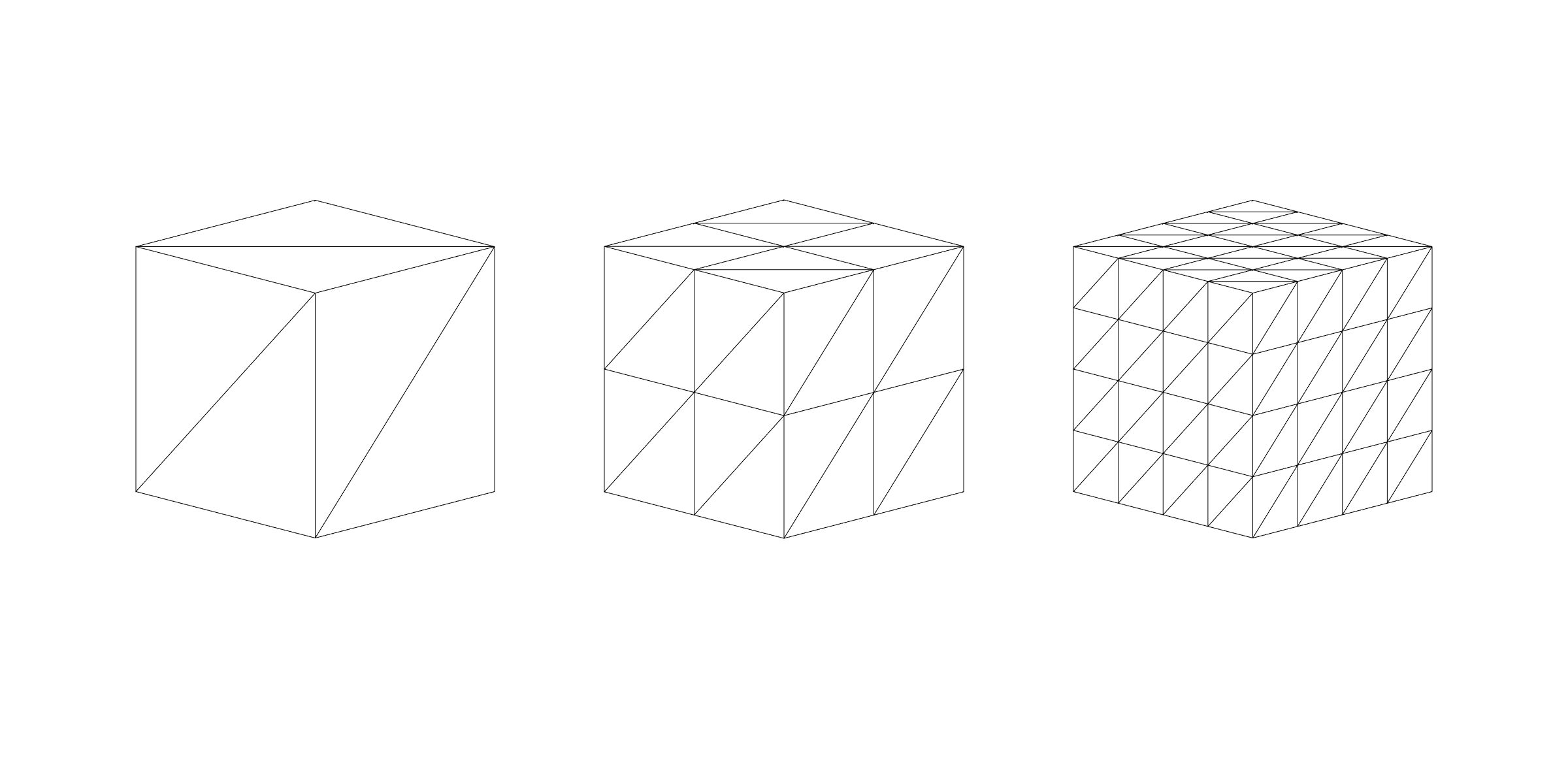}\\
	\vspace*{-4mm}
	\caption{The first three levels of tetrahedral grids in three dimensional test problems.}
	\label{level}
\end{figure}

\begin{example}\label{example3D4.1}
	Consider the elasticity problems (\ref{primal_model})-(\ref{bc2})  with $Lam\acute{e}$ constants $\mu=1$, $\lambda=1$ and the exact solution
	\begin{eqnarray*}
		\bu=\left(\begin{array}{ccc}
			\sin x \sin y \sin z\\
			\cos x \cos y \cos z\\
			\cos x \sin y \sin z
		\end{array}\right).
	\end{eqnarray*}
\end{example}

The error and convergence order are listed in Table \ref{tab:example3D4.1}. From the Table \ref{tab:example3D4.1}, it can be seen that in the three dimensional case, the numerical examples also achieve the optimal convergence order, which is consistent with the theory.

\begin{table}[h!]
	\centering
	\caption{Error and convergence order of displacement $\bu$ in Example \ref{example3D4.1}.}
	\label{tab:example3D4.1}
	\begin{tabular}{ccccc}
		\hline
		Level&$\3bar\bu_h-Q_h\bu\3bar$&order&$\|\bu_0-Q_0\bu\|$&order\\
		\hline
		2&2.4459e-01&--&3.9102e-02&--\\
		3&1.4957e-01&0.7096&1.4597e-02&1.4216\\
		4&8.2381e-02&0.8604&4.5213e-03&1.6909\\
		5&4.2773e-02&0.9456&1.2310e-03&1.8769\\
		6&2.1664e-02&0.9814&3.1692e-04&1.9577\\
		\hline
	\end{tabular}
\end{table}

\subsection{Locking test}
In this experiment, we validate the locking-free property of the new WG Algorithm \ref{algo-primal}.

\begin{example}\label{example2D2}(2D Locking-free test)
	Consider the elasticity problems (\ref{primal_model})-(\ref{bc2}) in the square
	domain $\Omega =(0,1)^2$. In this example, we use uniform triangular grids $\T_h$ with meshsize $h$, and set $Lam\acute{e}$ constant $\mu=1$. The exact solution $\bu$ is chosen as follows
	\begin{eqnarray*}
		\bu=\left( \begin{aligned}
			& (x^4-2x^3+x^2)(5y^4-8y^3+3y^2) \\
			& -(4x^3-6x^2+2x)(y^5-2y^4+y^3) \\
		\end{aligned} \right)+\dfrac{1}{\lambda}\left( \begin{aligned}
			& \sin(\pi x)\sin(\pi y) \\
			& \sin(\pi x)\sin(\pi y)\\
		\end{aligned} \right).
	\end{eqnarray*}
\end{example}

Tables \ref{tab:notations4}-\ref{tab:notations8} list the numerical results of $Lam\acute{e}$ constant $\lambda$ with different values.

\begin{table}[H]
	\centering
	\caption{Error and convergence order of displacement $\bu$ when $Lam\acute{e}$ constant $\lambda= 1$ in Example \ref {example2D2}.}
	\label{tab:notations4}
	\begin{tabular}{ccccc}
		\hline
		1/h&$\3bar\bu_h-Q_h\bu\3bar$&order&$\|\bu_0-Q_0\bu\|$&order\\
		\hline
		8&6.7787e-01&--&7.1208e-02&--\\
		16&3.4535e-01&0.9729&1.8137e-02&1.9731\\
		32&1.7350e-01&0.9931&4.5551e-03&1.9933\\
		64&8.6855e-02&0.9983&1.4000e-03&1.9984\\
		128&4.3441e-02&0.9996&2.8508e-04&1.9996\\
		\hline
	\end{tabular}
\end{table}

\begin{table}[H]
	\centering
	\caption{Error and convergence order of displacement $\bu$ when $Lam\acute{e}$ constant $\lambda= 10^2$ in Example \ref {example2D2}.}
	\label{tab:notations41}
	\begin{tabular}{ccccc}
		\hline
		1/h&$\3bar\bu_h-Q_h\bu\3bar$&order&$\|\bu_0-Q_0\bu\|$&order\\
		\hline
		8&1.8810e-02&--&2.0340e-03&--\\
		16&9.8071e-03&0.9396&5.3310e-04&1.9318\\
		32&4.9611e-03&0.9832&1.3556e-04&1.9755\\
		64&2.4881e-03&0.9956&3.4091e-05&1.9914\\
		128&1.2450e-03&0.9989&8.5418e-06&1.9968\\
		\hline
	\end{tabular}
\end{table}
\begin{table}[H]
	\centering
	\caption{Error and convergence order of displacement $\bu$ when $Lam\acute{e}$ constant $\lambda= 10^4$ in Example \ref {example2D2}.}
	\label{tab:notations414}
	\begin{tabular}{ccccc}
		\hline
		1/h&$\3bar\bu_h-Q_h\bu\3bar$&order&$\|\bu_0-Q_0\bu\|$&order\\
		\hline
		8&1.7263e-02&--&1.8492e-03&--\\
		16&9.0709e-03&0.9284&4.9503e-04&1.9013\\
		32&4.5986e-03&0.9801&1.2694e-04&1.9634\\
		64&2.3076e-03&0.9948&3.2036e-05&1.9864\\
		128&1.1548e-03&0.9987&8.0390e-06&1.9946\\
		\hline
	\end{tabular}
\end{table}

\begin{table}[H]
	\centering
	\caption{Error and convergence order of displacement $\bu$ when $Lam\acute{e}$ constant $\lambda= 10^6$ in Example \ref {example2D2}.}
	\label{tab:notations71}
	\begin{tabular}{ccccc}
		\hline
		1/h&$\3bar\bu_h-Q_h\bu\3bar$&order&$\|\bu_0-Q_0\bu\|$&order\\
		\hline
		8&1.7260e-02&--&1.8486e-03&--\\
		16&9.0697e-03&0.9283&4.9497e-04&1.9010\\
		32&4.5981e-03&0.9800&1.2693e-04&1.9633\\
		64&2.3073e-03&0.9948&3.2035e-05&1.9864\\
		128&1.1547e-03&0.9987&8.0390e-06&1.9946\\
		\hline
	\end{tabular}
\end{table}

\begin{table}[H]
	\centering
	\caption{Error and convergence order of displacement $\bu$ when $Lam\acute{e}$ constant $\lambda= 10^{8}$ in Example \ref {example2D2}.}
	\label{tab:notations8}
	\begin{tabular}{ccccc}
		\hline
		1/h&$\3bar\bu_h-Q_h\bu\3bar$&order&$\|\bu_0-Q_0\bu\|$&order\\
		\hline
		8&1.7260e-02&--&1.8486e-03&--\\
		16&9.0697e-03&0.9283&4.9497e-04&1.9010\\
		32&4.5981e-03&0.9800&1.2693e-04&1.9633\\
		64&2.3073e-03&0.9948&3.2035e-05&1.9863\\
		128&1.1547e-03&0.9987&8.0396e-06&1.9945\\
		\hline
	\end{tabular}
\end{table}

\begin{example}\label{example3D4.3}(3D Locking-free test)
	Consider the elasticity problems (\ref{primal_model})-(\ref{bc2}) with  $\Omega =(0,1)^3$. In the computation, we set $Lam\acute{e}$ constant $\mu=1$. The exact solution $\bu$ is chosen as follows
	\begin{eqnarray*}
		\bu=\left( \begin{aligned}
			&z^3 \sin x \sin y \\
			& 5z^3 \cos x \cos y \\
			& z^4 \cos x \sin y
		\end{aligned} \right)+\dfrac{1}{\lambda}\left( \begin{aligned}
			& \sin x \\
			& \sin y\\
			& \sin z
		\end{aligned} \right),
	\end{eqnarray*}
	where
	\begin{equation*}
		\nabla\cdot \bu=\dfrac{1}{\lambda}(\cos x+\cos y+\cos z).
	\end{equation*}
	In this example, we use tetrahedral grids as shown in Figure \ref{level}.
\end{example}

In the computation, we repeat the computation in Example \ref{example3D4.3}, with $Lam\acute{e}$ constant $\lambda=1$, $\lambda=10^2$, $\lambda=10^4$, $\lambda=10^6$, and $\lambda=10^{8}$. The numerical results are listed in Tables \ref{tab:example3D4.3.1}-\ref{tab:example3D4.3.3}.

\begin{table}[!ht]
	\centering
	\caption{Error and convergence order of displacement $\bu$ when $Lam\acute{e}$ constant $\lambda= 1$ in Example \ref {example3D4.3}.}
	\label{tab:example3D4.3.1}
	\begin{tabular}{ccccc}
		\hline
		Level&$\3bar\bu_h-Q_h\bu\3bar$&order&$\|\bu_0-Q_0\bu\|$&order\\
		\hline
		2     & 1.4977e+00 & -- & 2.1295e-01 &-- \\
		3     & 9.5175e-01 & 0.6541 & 7.2238e-02 & 1.5597 \\
		4     & 5.2024e-01 & 0.8714 & 2.0280e-02 & 1.8327 \\
		5     & 2.6805e-01 & 0.9567 & 5.2706e-03 & 1.9440 \\
		6     & 1.3531e-01 & 0.9863 & 1.3334e-03 & 1.9828 \\
		\hline
	\end{tabular}
\end{table}

\begin{table}[!ht]
	\centering
	\caption{Error and convergence order of displacement $\bu$ when $Lam\acute{e}$ constant $\lambda= 10^2$ in Example \ref {example3D4.3}.}
	\label{tab:example3D4.3.21}
	\begin{tabular}{ccccc}
		\hline
		Level&$\3bar\bu_h-Q_h\bu\3bar$&order&$\|\bu_0-Q_0\bu\|$&order\\
		\hline
		2     & 1.4607e+00 & -- & 2.2674e-01 & -- \\
		3     & 9.3918e-01 & 0.6372 & 8.5329e-02 & 1.4099 \\
		4     & 5.2231e-01 & 0.8465 & 2.6046e-02 & 1.7120 \\
		5     & 2.7164e-01 & 0.9432 & 7.0264e-03 & 1.8902 \\
		6     & 1.3760e-01 & 0.9812 & 1.8004e-03 & 1.9645 \\
		\hline
	\end{tabular}
\end{table}

\begin{table}[!ht]
	\centering
	\caption{Error and convergence order of displacement $\bu$ when $Lam\acute{e}$ constant $\lambda= 10^4$ in Example \ref {example3D4.3}.}
	\label{tab:example3D4.3.22}
	\begin{tabular}{ccccc}
		\hline
		Level&$\3bar\bu_h-Q_h\bu\3bar$&order&$\|\bu_0-Q_0\bu\|$&order\\
		\hline
		2     & 1.4609e+00 & -- & 2.2764e-01 & -- \\
		3     & 9.3930e-01 & 0.6372 & 8.6275e-02 & 1.3998 \\
		4     & 5.2240e-01 & 0.8464 & 2.6533e-02 & 1.7012 \\
		5     & 2.7169e-01 & 0.9432 & 7.1903e-03 & 1.8836 \\
		6     & 1.3763e-01 & 0.9812 & 1.8464e-03 & 1.9613 \\
		\hline
	\end{tabular}
\end{table}

\begin{table}[!ht]
	\centering
	\caption{Error and convergence order of displacement $\bu$ when $Lam\acute{e}$ constant $\lambda= 10^6$ in Example \ref {example3D4.3}.}
	\label{tab:example3D4.3.2}
	\begin{tabular}{ccccc}
		\hline
		Level&$\3bar\bu_h-Q_h\bu\3bar$&order&$\|\bu_0-Q_0\bu\|$&order\\
		\hline
		2     & 1.4609e+00 & -- & 2.2765e-01 & -- \\
		3     & 9.3930e-01 & 0.6372 & 8.6285e-02 & 1.3997 \\
		4     & 5.2240e-01 & 0.8464 & 2.6538e-02 & 1.7011 \\
		5     & 2.7169e-01 & 0.9432 & 7.1921e-03 & 1.8836 \\
		6     & 1.3763e-01 & 0.9812 & 1.8469e-03 & 1.9613 \\
		\hline
	\end{tabular}
\end{table}

\begin{table}[!ht]
	\centering
	\caption{Error and convergence order of displacement $\bu$ when $Lam\acute{e}$ constant $\lambda= 10^{8}$ in Example \ref {example3D4.3}.}
	\label{tab:example3D4.3.3}
	\begin{tabular}{ccccc}
		\hline
		Level&$\3bar\bu_h-Q_h\bu\3bar$&order&$\|\bu_0-Q_0\bu\|$&order\\
		\hline
		2     & 1.4609e+00 & -- & 2.2765e-01 & -- \\
		3     & 9.3930e-01 & 0.6372 & 8.6285e-02 & 1.3997 \\
		4     & 5.2240e-01 & 0.8464 & 2.6538e-02 & 1.7011 \\
		5     & 2.7169e-01 & 0.9432 & 7.1921e-03 & 1.8836 \\
		6     & 1.3763e-01 & 0.9812 & 1.8470e-03 & 1.9612 \\
		\hline
	\end{tabular}
\end{table}

From  Tables \ref{tab:notations4}-\ref{tab:notations8} and Tables \ref{tab:example3D4.3.1}-\ref{tab:example3D4.3.3}, it can be seen that the errors are not affected by $\lambda$, which implies that the WG numerical scheme is locking-free. This is consistent with the previous theoretical analysis.

In the following example,  we consider the situation when the  term $\lambda\|\nabla\cdot\bu\|_1$ is unbounded.

\begin{example}\label{example2D6}(2D Locking-free test with unbounded $\lambda\|\nabla\cdot\bu\|_1$)
	Consider the elasticity problems (\ref{primal_model})-(\ref{bc2}) in the square
	domain $\Omega =(0,1)^2$. In this example, we use uniform triangular grids $\T_h$ with meshsize $h$, and set the $Lam\acute{e}$ constant $\mu=1$. The exact solution $\bu$ is chosen as follows
	\begin{eqnarray*}
		\bu=\left( \begin{aligned}
			& \sin(\pi x)\cos(\pi y) \\
			& \cos(\pi x)\sin(\pi y) \\
		\end{aligned} \right),
	\end{eqnarray*}
	and the right-hand side function $\bbf$ is
	\begin{eqnarray*}
		\bbf=-\mu\left( \begin{aligned}
			& -2\pi^2\sin(\pi x)\cos(\pi y) \\
			& -2\pi^2\cos(\pi x)\sin(\pi y) \\
		\end{aligned} \right)-(\lambda+\mu)\left( \begin{aligned}
			& -2\pi^2\sin(\pi x)\cos(\pi y) \\
			& -2\pi^2\cos(\pi x)\sin(\pi y) \\
		\end{aligned} \right).
	\end{eqnarray*}
\end{example}

Table \ref{tab:example2D61} illustrates the error and convergence order of the new  WG Algorithm  \ref{algo-primal} and the standard WG Algorithm \ref{algo-primal_old} for various values of the $Lam\acute{e}$ constant $\lambda$. 
From the table, it can be seen that the  errors from the standard WG Algorithm \ref{algo-primal_old} depend on the $Lam\acute{e}$ constant $\lambda$.  As the value of $Lam\acute{e}$ constant $\lambda$ increases, both the energy norm and the $L^2$ norm of the approximate error increase. Different from the standard WG Algorithm \ref{algo-primal_old}, the displacement errors of the new  WG Algorithm  \ref{algo-primal} are independent of the $Lam\acute{e}$ constant $\lambda$, indicating that the new  WG Algorithm  \ref{algo-primal} is robust with respect to $\lambda$.

\begin{table}[!ht]
	\centering
	\caption{Error and convergence order of displacement $\bu$ in Example \ref {example2D6}.}
	\label{tab:example2D61}
	\scalebox{0.90}{
		\begin{tabular}{ccccccccc}
			\hline
			& \multicolumn{4}{c}{New WG Algorithm \ref{algo-primal}} & \multicolumn{4}{c}{Standard WG Algorithm \ref{algo-primal_old}} \\
			\hline
			1/h&$\3bar\bu_h-Q_h\bu\3bar$&order&$\|\bu_0-Q_0\bu\|$&order&$\3bar\bu_h-Q_h\bu\3bar$&order&$\|\bu_0-Q_0\bu\|$&order\\
			\hline
			\multicolumn{9}{c}{$\lambda=1$}\\
			\hline
			8     & 4.2490e-01 & --& 4.4184e-02 &  --  & 2.5235e+00 & --   & 1.5532e-01 & --\\
			16    & 2.2787e-01 & 0.8989 & 1.1832e-02 & 1.9008 & 1.2748e+00 & 0.9851 & 3.8405e-02 & 2.0150 \\
			32    & 1.1607e-01 & 0.9732 & 3.0130e-03 & 1.9735 & 6.3929e-01 & 0.9957 & 9.5883e-03 & 2.0020 \\
			64    & 5.8312e-02 & 0.9932 & 7.5679e-04 & 1.9932 & 3.1989e-01 & 0.9989 & 2.3965e-03 & 2.0003 \\
			128   & 2.9190e-02 & 0.9983 & 1.8942e-04 & 1.9983 & 1.5998e-01 & 0.9997 & 5.9910e-04 & 2.0001 \\
			\hline
			\multicolumn{9}{c}{$\lambda=10^2$}\\
			\hline
			8     & 4.1003e-01 & --  & 4.3290e-02 &   -- & 9.3427e+01 &  --  & 6.3728e+00 &--\\
			16    & 2.2518e-01 & 0.8647 & 1.1878e-02 & 1.8658 & 4.7794e+01 & 0.9670 & 1.6276e+00 & 1.9692 \\
			32    & 1.1562e-01 & 0.9617 & 3.0507e-03 & 1.9611 & 2.4067e+01 & 0.9897 & 4.1074e-01 & 1.9865 \\
			64    & 5.8211e-02 & 0.9900 & 7.6811e-04 & 1.9898 & 1.2056e+01 & 0.9972 & 1.0297e-01 & 1.9961 \\
			128   & 2.9157e-02 & 0.9975 & 1.9237e-04 & 1.9974 & 6.0311e+00 & 0.9993 & 2.5760e-02 & 1.9990 \\
			
			\hline
			\multicolumn{9}{c}{$\lambda=10^4$}\\
			\hline
			8     & 4.1000e-01 &   --  & 4.3330e-02 & --  & 9.1867e+03 &  -- & 6.2943e+02 & -- \\
			16    & 2.2517e-01 & 0.8646 & 1.1894e-02 & 1.8651 & 4.7010e+03 & 0.96659 & 1.6089e+02 & 1.9679 \\
			32    & 1.1562e-01 & 0.9617 & 3.0555e-03 & 1.9608 & 2.3674e+03 & 0.98963 & 4.0616e+01 & 1.9860 \\
			64    & 5.8210e-02 & 0.9900 & 7.6935e-04 & 1.9897 & 1.1860e+03 & 0.99722 & 1.0183e+01 & 1.9959 \\
			128   & 2.9157e-02 & 0.9975 & 1.9269e-04 & 1.9974 & 5.9329e+02 & 0.99929 & 2.5476e+00 & 1.9989 \\
			\hline
			\multicolumn{9}{c}{$\lambda=10^6$}\\
			\hline
			8     & 4.1000e-01 & --  & 4.3331e-02 &   -- & 9.1851e+05 & --   & 6.2935e+04 & -- \\
			16    & 2.2517e-01 & 0.8646 & 1.1894e-02 & 1.8651 & 4.7002e+05 & 0.96658 & 1.6088e+04 & 1.9679 \\
			32    & 1.1562e-01 & 0.9617 & 3.0555e-03 & 1.9608 & 2.3670e+05 & 0.98963 & 4.0611e+03 & 1.9860 \\
			64    & 5.8210e-02 & 0.9900 & 7.6937e-04 & 1.9897 & 1.1858e+05 & 0.99722 & 1.0182e+03 & 1.9959 \\
			128   & 2.9157e-02 & 0.9975 & 1.9269e-04 & 1.9974 & 5.9319e+04 & 0.99929 & 2.5473e+02 & 1.9989 \\
			\hline
			\multicolumn{9}{c}{$\lambda=10^8$}\\
			\hline
			8  & 4.1000e-01 & --   & 4.3331e-02 & --   & 9.1851e+07 & --  & 6.2935e+06 & --\\
			16    & 2.2517e-01 & 0.8646 & 1.1894e-02 & 1.8651 & 4.7002e+07 & 0.9665 & 1.6088e+06 & 1.9679 \\
			32    & 1.1562e-01 & 0.9617& 3.0555e-03 & 1.9608 & 2.3670e+07 & 0.9896 & 4.0611e+05 & 1.9860 \\
			64    & 5.8210e-02 & 0.9900 & 7.6937e-04 & 1.9897 & 1.1858e+07 & 0.9972 & 1.0182e+05 & 1.9959 \\
			128   & 2.9157e-02 & 0.9975 & 1.9271e-04 & 1.9973 & 5.9319e+06 & 0.9992 & 2.5473e+04 & 1.9989 \\
			\hline
		\end{tabular}
	}
\end{table}

\section*{Acknowledgments}
Y. Wang was supported by the the National Natural Science Foundation of China (grant No. 12171244). R. Zhang and R. Wang were supported by the National Natural Science Foundation of China (grant No. 11971198, 12001230), the National Key Research and Development Program of China (grant No. 2020YFA0713602), and the Key Laboratory of Symbolic Computation and Knowledge Engineering of Ministry of Education of China housed at Jilin University.

\bibliographystyle{siam}
\bibliography{lib}

\begin{thebibliography}{10}

\bibitem{abd1984}
{\sc D.~N. Arnold, F.~Brezzi, and J.~Douglas, Jr.}, {\em P{EERS}: a new mixed
  finite element for plane elasticity}, Japan J. Appl. Math., 1 (1984),
  pp.~347--367.

\bibitem{babuska-suri}
{\sc I.~Babu\v{s}ka and M.~Suri}, {\em Locking effects in the finite element
  approximation of elasticity problems}, Numer. Math., 62 (1992), pp.~439--463.

\bibitem{bbf2009}
{\sc D.~Boffi, F.~Brezzi, and M.~Fortin}, {\em Reduced symmetry elements in
  linear elasticity}, Commun. Pure Appl. Anal., 8 (2009), pp.~95--121.

\bibitem{b2008}
{\sc S.~C. Brenner and L.~R. Scott}, {\em The mathematical theory of finite
  element methods. Texts in Applied Mathematics}, vol.~15, Springer, New York,
  3rd~ed., 2008.

\bibitem{Brezzi-Fortin}
{\sc F.~Brezzi and M.~Fortin}, {\em Mixed and hybrid finite element methods},
  Springer, Berlin, 1991.

\bibitem{chenxie2016}
{\sc G.~Chen and X.~Xie}, {\em A robust weak {G}alerkin finite element method
  for linear elasticity with strong symmetric stresses}, Comput. Methods Appl.
  Math., 16 (2016), pp.~389--408.

\bibitem{CWWY15}
{\sc L.~Chen, J.~Wang, Y.~Wang, and X.~Ye}, {\em An auxiliary space multigrid
  preconditioner for the weak {G}alerkin method}, Comput. Math. Appl., 70
  (2015), pp.~330--344.

\bibitem{ciarlet-fem}
{\sc P.~G. Ciarlet}, {\em The Finite Element Method for Elliptic Problems},
  Society for Industrial and Applied Mathematics, 2002.

\bibitem{Ciarlet}
\leavevmode\vrule height 2pt depth -1.6pt width 23pt, {\em Mathematical
  Elasticity: Three-Dimensional Elasticity}, Society for Industrial and Applied
  Mathematics, Philadelphia, PA, 2021.

\bibitem{cgg2010}
{\sc B.~Cockburn, J.~Gopalakrishnan, and J.~Guzm\'{a}n}, {\em A new elasticity
  element made for enforcing weak stress symmetry}, Math. Comp., 79 (2010),
  pp.~1331--1349.

\bibitem{MuWangYeZhang14}
{\sc M.~Cui, X.~Ye, and S.~Zhang}, {\em A modified weak {G}alerkin finite
  element method for the biharmonic equation on polytopal meshes}, Commun.
  Appl. Math. Comput., 3 (2021), pp.~91--105.

\bibitem{Virtualelastic2013}
{\sc L.~B.~a. da~Veiga, F.~Brezzi, and L.~D. Marini}, {\em Virtual elements for
  linear elasticity problems}, SIAM J. Numer. Anal., 51 (2013), pp.~794--812.

\bibitem{Daniele2013}
{\sc D.~A. Di~Pietro and S.~Nicaise}, {\em A locking-free discontinuous
  {G}alerkin method for linear elasticity in locally nearly incompressible
  heterogeneous media}, Appl. Numer. Math., 63 (2013), pp.~105--116.

\bibitem{Virtualelastic2014}
{\sc A.~L. Gain, C.~Talischi, and G.~H. Paulino}, {\em On the virtual element
  method for three-dimensional linear elasticity problems on arbitrary
  polyhedral meshes}, Comput. Methods Appl. Mech. Engrg., 282 (2014),
  pp.~132--160.

\bibitem{larson}
{\sc P.~Hansbo and M.~G. Larson}, {\em Discontinuous {G}alerkin methods for
  incompressible and nearly incompressible elasticity by {N}itsche's method},
  Comput. Methods Appl. Mech. Engrg., 191 (2002), pp.~1895--1908.

\bibitem{Liu}
{\sc G.~Harper, J.~Liu, S.~Tavener, and B.~Zheng}, {\em Lowest-order weak
  {G}alerkin finite element methods for linear elasticity on rectangular and
  brick meshes}, J. Sci. Comput., 78 (2019), pp.~1917--1941.

\bibitem{hs2007}
{\sc J.~Hu and Z.-C. Shi}, {\em Lower order rectangular nonconforming mixed
  finite elements for plane elasticity}, SIAM J. Numer. Anal., 46 (2007/08),
  pp.~88--102.

\bibitem{HMY2018}
{\sc X.~Hu, L.~Mu, and X.~Ye}, {\em A weak {G}alerkin finite element method for
  the {N}avier-{S}tokes equations}, J. Comput. Appl. Math., 362 (2019),
  pp.~614--625.

\bibitem{maximumwang2}
{\sc W.~Huang and Y.~Wang}, {\em Discrete maximum principle for the weak
  {G}alerkin method for anisotropic diffusion problems}, Commun. Comput. Phys.,
  18 (2015), pp.~65--90.

\bibitem{LeeC}
{\sc C.-O. Lee, J.~Lee, and D.~Sheen}, {\em A locking-free nonconforming finite
  element method for planar linear elasticity}, Adv. Comput. Math., 19 (2003),
  pp.~277--291.

\bibitem{LLC18}
{\sc X.~Liu, J.~Li, and Z.~Chen}, {\em A weak {G}alerkin finite element method
  for the {N}avier-{S}tokes equations}, J. Comput. Appl. Math., 333 (2018),
  pp.~442--457.

\bibitem{LiuWang2023}
{\sc Y.~Liu and J.~Wang}, {\em A locking-free {$P_0$} finite element method for
  linear elasticity equations on polytopal partitions}, IMA J. Numer. Anal., 42
  (2022), pp.~3464--3498.

\bibitem{mhs2009}
{\sc H.-Y. Man, J.~Hu, and Z.-C. Shi}, {\em Lower order rectangular
  nonconforming mixed finite element for the three-dimensional elasticity
  problem}, Math. Models Methods Appl. Sci., 19 (2009), pp.~51--65.

\bibitem{MuStkoes2020}
{\sc L.~Mu}, {\em Pressure robust weak {G}alerkin finite element methods for
  {S}tokes problems}, SIAM J. Sci. Comput., 42 (2020), pp.~B608--B629.

\bibitem{MuWangYe14}
{\sc L.~Mu, J.~Wang, and X.~Ye}, {\em A stable numerical algorithm for the
  {B}rinkman equations by weak {G}alerkin finite element methods}, J. Comput.
  Phys., 273 (2014), pp.~327--342.

\bibitem{MLW14}
{\sc L.~Mu, J.~Wang, X.~Ye, and S.~Zhang}, {\em A weak {G}alerkin finite
  element method for the {M}axwell equations}, J. Sci. Comput., 65 (2015),
  pp.~363--386.

\bibitem{MuyezhangStkoes2021}
{\sc L.~Mu, X.~Ye, and S.~Zhang}, {\em A stabilizer-free, pressure-robust, and
  superconvergence weak {G}alerkin finite element method for the {S}tokes
  equations on polytopal mesh}, SIAM J. Sci. Comput., 43 (2021),
  pp.~A2614--A2637.

\bibitem{vogelius}
{\sc M.~Vogelius}, {\em A right-inverse for the divergence operator in spaces
  of piecewise polynomials. {A}pplication to the {$p$}-version of the finite
  element method}, Numer. Math., 41 (1983), pp.~19--37.

\bibitem{wangMuStkoes2021}
{\sc B.~Wang and L.~Mu}, {\em Viscosity robust weak {G}alerkin finite element
  methods for {S}tokes problems}, Electron. Res. Arch., 29 (2021),
  pp.~1881--1895.

\bibitem{lockingw}
{\sc C.~Wang, J.~Wang, R.~Wang, and R.~Zhang}, {\em A locking-free weak
  {G}alerkin finite element method for elasticity problems in the primal
  formulation}, J. Comput. Appl. Math., 307 (2016), pp.~346--366.

\bibitem{wysec2}
{\sc J.~Wang and X.~Ye}, {\em A weak {G}alerkin finite element method for
  second-order elliptic problems}, J. Comput. Appl. Math., 241 (2013),
  pp.~103--115.

\bibitem{wy1202}
\leavevmode\vrule height 2pt depth -1.6pt width 23pt, {\em A weak {G}alerkin
  mixed finite element method for second order elliptic problems}, Math. Comp.,
  83 (2014), pp.~2101--2126.

\bibitem{wy1302}
\leavevmode\vrule height 2pt depth -1.6pt width 23pt, {\em A weak {G}alerkin
  finite element method for the stokes equations}, Adv. Comput. Math., 42
  (2016), pp.~155--174.

\bibitem{WangYeZhaiZhang18}
{\sc J.~Wang, X.~Ye, Q.~Zhai, and R.~Zhang}, {\em Discrete maximum principle
  for the {$P_1$}-{$P_0$} weak {G}alerkin finite element approximations}, J.
  Comput. Phys., 362 (2018), pp.~114--130.

\bibitem{WangZhaiZhangS2016}
{\sc R.~Wang, X.~Wang, Q.~Zhai, and R.~Zhang}, {\em A weak {G}alerkin finite
  element scheme for solving the stationary {S}tokes equations}, J. Comput.
  Appl. Math., 302 (2016), pp.~171--185.

\bibitem{WangZhangZhangZhang18}
{\sc R.~Wang, R.~Zhang, X.~Zhang, and Z.~Zhang}, {\em Supercloseness analysis
  and polynomial preserving recovery for a class of weak {G}alerkin methods},
  Numer. Methods Partial Differential Equations, 34 (2018), pp.~317--335.

\bibitem{WangZhaiZhang2016}
{\sc X.~Wang, Q.~Zhai, and R.~Zhang}, {\em The weak {G}alerkin method for
  solving the incompressible {B}rinkman flow}, J. Comput. Appl. Math., 307
  (2016), pp.~13--24.

\bibitem{wangwangliu2022}
{\sc Z.~Wang, R.~Wang, and J.~Liu}, {\em Robust weak {G}alerkin finite element
  solvers for {S}tokes flow based on a lifting operator}, Comput. Math. Appl.,
  125 (2022), pp.~90--100.

\bibitem{Wihler}
{\sc T.~P. Wihler}, {\em Locking-free adaptive discontinuous {G}alerkin {FEM}
  for linear elasticity problems}, Math. Comp., 75 (2006), pp.~1087--1102.

\bibitem{Yisongyang}
{\sc S.-Y. Yi}, {\em A lowest-order weak {G}alerkin method for linear
  elasticity}, J. Comput. Appl. Math., 350 (2019), pp.~286--298.

\bibitem{ZhaiZhangMu16}
{\sc Q.~Zhai, R.~Zhang, and L.~Mu}, {\em A new weak {G}alerkin finite element
  scheme for the {B}rinkman model}, Commun. Comput. Phys., 19 (2016),
  pp.~1409--1434.

\bibitem{zzlw2018}
{\sc J.~Zhang, K.~Zhang, J.~Li, and X.~Wang}, {\em A weak {G}alerkin finite
  element method for the {N}avier-{S}tokes equations}, Commun. Comput. Phys.,
  23 (2018), pp.~706--746.

\bibitem{ZhangZhai15}
{\sc R.~Zhang and Q.~Zhai}, {\em A weak {G}alerkin finite element scheme for
  the biharmonic equations by using polynomials of reduced order}, J. Sci.
  Comput., 64 (2015), pp.~559--585.

\bibitem{ZL18}
{\sc T.~Zhang and T.~Lin}, {\em The weak {G}alerkin finite element method for
  incompressible flow}, J. Math. Anal. Appl., 464 (2018), pp.~247--265.

\bibitem{zhangzhimin1997}
{\sc Z.~Zhang}, {\em Analysis of some quadrilateral nonconforming elements for
  incompressible elasticity}, SIAM J. Numer. Anal., 34 (1997), pp.~640--663.

\end{thebibliography}

\end{document}